\theoremstyle{plain}
\newtheorem{theorem}{Theorem}[section]
\newtheorem{lemma}[theorem]{Lemma}
\newtheorem{prop}[theorem]{Proposition}
\theoremstyle{definition}
\newtheorem{definition}[theorem]{Definition}
\theoremstyle{remark}
\newcommand{\mb}[1]{\mathbb{#1}}
\newcommand{\mfr}[1]{\mathfrak{#1}}
\title{On improving a Schur-type theorem in shifted primes}
\author{Ruoyi Wang}
\address{Edinburgh, Scotland}
\email{zoe.wang.maths@gmail.com }
\thanks{The work was conducted during the author's DPhil study, which was supported by a Clarendon Scholarship of the University of Oxford and a Jason Hu Scholarship of Balliol College.}
\begin{document}

\begin{abstract}
We show that if $N \geq \exp(\exp(\exp (k^{O(1)})))$, then any $k$-colouring of the primes that are less than $N$ contains a monochromatic solution to $p_1 - p_2 = p_3 -1$.
\end{abstract}

\maketitle

\section{Introduction}
Schur~\cite{Sch17} proved that the equation $x+y=z$ is partition regular over the set of the natural numbers in his celebrated paper published over a hundred years ago. More specifically, let $r(k)$ be the smallest positive integer such that every $k$-colouring of the set $[r(k)]$ contains a monochromatic solution to $x + y = z$ where $x,y,z \in [r(k)]$. It follows from Schur's paper that one has 
\begin{equation}\label{Schur integers}
\exp(\Omega (k)) \leq r(k) \leq \exp (O(k \log k)).
\end{equation}

There is an extensive collection of work on improving bounds for $r_p(k)$, see Irving~\cite{Irv73} for bounding it from above, see Heule~\cite{Heu18} and its reference concerning the lower bound.

Li and Pan~\cite{LP12} showed that the equation $x+y=z$ is also partition regular over the sparse set $\mb{P} -1$, where $\mb{P}$ denotes the primes. Let $r_p (k)$ be the smallest integer such that for any $N \geq r_p (k)$, any $k$-colouring of $\mb{P} \cap [N]$ contains a monochromatic solution to
\begin{equation*} %\label{PR shifted primes}
p_1 - p_2 = p_3 - 1.
\end{equation*}
Although not explicitly stated, the following bound follows from the proof of Li and Pan. 
\begin{theorem}[Li and Pan~\cite{LP12}]\label{Li--Pan bound}
We have $$r_p (k) \leq \exp(\exp(\exp(\exp(k^{O(1)})))).$$
\end{theorem}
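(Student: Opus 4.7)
The first step would be the substitution $a_i = p_i - 1$, which converts $p_1 - p_2 = p_3 - 1$ into the Schur equation $a_1 = a_2 + a_3$ with $a_i \in \mb{P} - 1$. So the task reduces to showing that any $k$-colouring of $(\mb{P}\cap[N]) - 1$ contains a monochromatic Schur triple, provided $N$ is at least quadruply exponential in $k$.

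My plan is to take a transference approach. I would fix a parameter $w$ to be chosen later and set $W = \prod_{p \le w} p$, then restrict attention to primes $p \le N$ with $p \equiv 1 \pmod W$; after dividing through by $W$, these form a set $P \subset [N/W]$ whose relative density is $\gg 1/\phi(W)$. A Schur triple in $\mb{P} - 1$ with entries in the residue class $W\mb{Z}$ corresponds to a Schur triple in $P$. This $W$-trick both kills off local obstructions at small moduli and puts the set of shifted primes in a position to be majorised by a Green-type pseudorandom measure $\nu$.

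Next I would combine the $k$-colouring with a Fourier-analytic transference argument to produce, for the heaviest colour class $A$, a ``model'' set $B \subseteq [M]$ of density $\gg 1/k$ whose Schur-triple count is comparable to that of $A$. If $M \ge \exp(Ck \log k)$, Schur's theorem applied to $[M]$ guarantees a monochromatic Schur triple in $B$, and a supersaturated version would provide enough triples to transfer back to at least one monochromatic triple in $A$, as desired.

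The main obstacle will be the quantitative bookkeeping. Each of the analytic ingredients --- the $W$-trick, the Fourier transference, and the majorant estimates for $\nu$ --- forces $N$ to be an exponential factor larger than the parameter at which the integer Schur theorem is applied. Iterating these losses on top of the $\exp(O(k \log k))$ from \eqref{Schur integers} would produce bounds of the shape $\exp(\exp(\exp(\exp(k^{O(1)}))))$. The hardest step will be a pseudorandomness check for the majorant of $(\mb{P} - 1)/W$ that is sufficiently uniform in $W$; Li--Pan carry this out via Vinogradov-type estimates and sieve bounds, and it is precisely this step that one would want to revisit in order to save an exponential and improve the final bound.
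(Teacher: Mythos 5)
The paper does not actually prove Theorem~\ref{Li--Pan bound}; it records it as a consequence of Li and Pan's argument and only describes their strategy in the introduction (Green-style transference with a $W$-trick), before taking an entirely different route (Ruzsa--Sanders Fourier concentration and density increment) to prove the improved Theorem~\ref{improved PR bound}. Your plan is consistent with the paper's description of Li--Pan's method at the level of the majorant, the $W$-trick, and the dense model.

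However, there is a genuine gap in your final step. You transfer only the heaviest colour class $A$ to a single dense model set $B\subseteq[M]$ of density $\gg 1/k$, and then invoke ``Schur's theorem applied to $[M]$'' to find a Schur triple in $B$. That does not follow: Schur's theorem is a statement about colourings, not about dense sets, and the equation $x+y=z$ is \emph{not} density regular --- for instance, the odd numbers have density $1/2$ in $[M]$ yet contain no solution to $x+y=z$. Having $M\geq r(k)$ tells you that any $k$-colouring of $[M]$ has a monochromatic triple; it says nothing about a single subset of density $1/k$. To run transference for a partition-regularity statement, one must transfer \emph{all} colour classes at once: write $\nu 1_{A_i}=f_i+g_i$ for each colour $i$, so that the $f_i$ form (approximately) a fractional $k$-colouring of the dense model, and then apply a supersaturated, counting form of Schur's theorem to that fractional colouring before transferring back. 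This is the step your plan is missing, and it is where the colouring structure enters in an essential way. With that correction the outline would match Li--Pan's route; but note it still differs materially from the paper's own method, which avoids the $L^\infty$-transference principle altogether in favour of an iterated density-increment argument built on the major/minor arc estimates of Propositions~\ref{major arcs prop} and~\ref{minor arc estimate}.
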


We prove the following quantitative strengthening.
\begin{theorem}\label{improved PR bound}
We have
$$r_p (k) \leq \exp(\exp(\exp(k^{O(1)}))).$$
\end{theorem}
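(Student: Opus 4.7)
By pigeonhole, some colour class $A \subseteq \mb{P} \cap [N]$ has relative density $\delta \geq 1/k$. The plan is to establish the density statement that any such $A$ contains a solution of $p_1 - p_2 = p_3 - 1$ whenever $N \geq \exp(\exp(\exp(k^{O(1)})))$, which at once implies Theorem~\ref{improved PR bound}. The framework is the Hardy--Littlewood circle method combined with a Green--Tao-style $W$-trick and transference.

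I would begin with the $W$-trick, setting $W = \prod_{p \leq w} p$ for $w = k^{O(1)}$ and pigeonholing among the primes $\equiv 1 \pmod W$ (the unique invertible residue modulo $W$ compatible with the shift by $1$ in the equation) to find a colour class whose restriction still has relative density $\gg 1/k$. Writing $f : [N'] \to [0, \infty)$, $N' \asymp N/W$, for the rescaled weighted indicator $f(n) = \mf{1}_A(Wn+1)\Lambda(Wn+1)/\log N$, the shifted Schur triples in $A$ are in bijection with genuine Schur triples $n_1 = n_2 + n_3$ in the support of $f$, so the weighted count equals the Fourier integral
\[
T(f) = \int_0^1 \hat f(\alpha)^2 \, \overline{\hat f(\alpha)} \, d\alpha.
\]
I would then split $T(f) = T_{\mfr{M}}(f) + T_{\mfr{m}}(f)$ into major and minor arcs: on $\mfr{M}$ the $W$-trick forces a positive singular series and gives a main term of order $\delta^3 {N'}^2 \gg {N'}^2/k^3$; on $\mfr{m}$ a Fourier uniformity estimate for a Selberg-type majorant $\nu$ pointwise dominating $f$ controls the error.

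The main obstacle is obtaining the minor arc estimate with the correct quantitative dependence on $\delta$. In the spirit of the Li--Pan argument, a natural approach is an iterative density increment on Bohr sets, but each iteration costs one exponential in $1/\delta$, producing a fourfold tower. I would instead use a single-pass Fourier uniformity estimate for the Selberg majorant of the shifted primes in residue $1 \pmod W$, drawing on refinements of Green's sieve-theoretic Fourier bounds for the primes; this avoids the density-increment iteration and introduces only one further exponential in $k$. The parameter constraints --- $w \sim k^{O(1)}$ for the $W$-trick, $\log\log N \gtrsim k^{O(1)}$ for the singular series to dominate, and $\log\log\log N \gtrsim k^{O(1)}$ for the Fourier uniformity on $\mfr{m}$ --- then combine to give the required $N \geq \exp(\exp(\exp(k^{O(1)})))$.
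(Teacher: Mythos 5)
Your proposal founders on a fundamental obstruction before any of the analytic machinery comes into play: the equation $p_1 - p_2 = p_3 - 1$ (equivalently, after the change of variables, $n_1 = n_2 + n_3$) is \emph{not} density regular, so the density statement you propose to prove is false. Concretely, if $A$ consists of the primes in the interval $(N/2, N]$, then $A$ has relative density $\geq 1/2$ in $\mb{P} \cap [N]$, yet there is no solution to $p_1 = p_2 + p_3 - 1$ with $p_1, p_2, p_3 \in A$ since $p_2 + p_3 - 1 > N$. Thus pigeonholing to a single colour class of density $\geq 1/k$ and computing $T(f) = \int \hat f(\alpha)^2 \overline{\hat f(\alpha)}\,d\alpha$ cannot give a nonvanishing lower bound; the major-arc ``main term of order $\delta^3 N'^2$'' you envisage simply does not exist for arbitrary dense $A$, regardless of how the minor arcs are handled. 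Schur's theorem is a Ramsey statement, and any correct proof must exploit the colouring structure beyond a single pigeonhole step. (Even Li and Pan, whose transference you invoke, cannot and do not transfer a one-colour-class density statement; what gets transferred in such arguments is a Frankl--Graham--R\"odl-type counting result for $k$-colourings, which is a genuinely different object.)

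The paper's strategy avoids this issue by never claiming a density statement for the Schur equation. The density result that \emph{is} true and is used (Lemma~\ref{inner iteration PR}, via the Ruzsa--Sanders iteration in Lemma~\ref{iteration}) is a S\'ark\"ozy-type theorem: if $A$ has large relative density, then $A - A$ contains many elements of $\mb{P}-1$. The colouring then enters through a bootstrap (Lemma~\ref{PR iteration}): if no element of $(A-A) \cap (\mb{P}-1)$ shares $A$'s colour, one finds a large monochromatic subset $B$ of $(A-A)\cap(\mb{P}-1)$ in some other colour, translates it via the averaging Lemma~\ref{PR averaging} to intersect $A$, and passes to $A'' = B \cap (A - n)$, whose difference set sits inside $A - A$ and whose colour pool has shrunk by one. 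Running this $k$ times yields a contradiction unless $N$ is small. Note also that the paper does \emph{not} sidestep density increments on progressions; the increment iteration is precisely the engine inside Lemma~\ref{iteration}, and the exponential savings over Li--Pan come from replacing the $L^\infty$-transference counting framework with the Fourier concentration argument for difference sets together with the colour bootstrap, not from a ``single-pass'' minor arc bound. Your parameter heuristics at the end ($\log\log N \gtrsim k^{O(1)}$, etc.) cannot be assessed without a correct main-term mechanism, but as matters stand the proposal needs the colouring bootstrap (or a genuine $k$-colouring counting lemma in the style of Frankl--Graham--R\"odl) inserted at its core.
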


Li and Pan used a somewhat similar strategy as Green's~\cite{Gre05b} proof of Roth's theorem in the primes which can be summarised as follows. Firstly, one needs the counting result in the setting of the integers, which are essentially quantitative versions of Schur's theorem and Roth's theorem, respectively. Secondly, one aims to prove the corresponding counting result in the primes. This is done by the transference principle, which typically involves two parts ---
an analytic framework which transfers the result in the setting of the integers to suitably weighted functions, and finding an appropriate way to weight the integers in a way that resembles the primes\footnote{Precisely speaking, the weight used in the work of Green and Li--Pan resembles a subset of primes which is contained in an arithmetic progression. One needs to choose the common difference of the progression carefully to avoid the Fourier bias caused by small primes.}.

We shall take a rather different approach by using the method from~\cite{San20} and applying existing proofs~\cite{RS08, Wan20} concerning the upper bound on the cardinality of subsets of $[N]$ whose difference sets avoid $\mb{P}-1$. These proofs~\cite{RS08, Wan20} follow from a Fourier concentration argument which differs from the $L^\infty$-transference principle used in the proof of Li and Pan. There are three major steps involved in the proof of Theorem~\ref{improved PR bound}. Firstly, we derive an iteration lemma which helps us to find many shifted primes in the difference set of any set $A$ with large density. By the pigeonhole principle, there exists a large monochromatic set amongst those shifted primes, and it turns out that a translate of this monochromatic set has a large intersection with the original set $A$. These considerations ultimately yield a colouring bootstrapping lemma. The final part of the proof is to bootstrap this lemma. We eventually conclude that there is always a monochromatic solution to $p_1 - p_2 = p_3 -1$, where $p_1, p_2, p_3 \in [N] \cap \mb{P}$, unless $N$ is small in terms of $k$, which completes the proof. 

Since the major arc estimate used to prove Lemma~\ref{iteration} holds only for $\mb{P} \pm 1$, our method cannot be used to study the partition regularity of $p_1 - p_2 = p_3 - t$, where $p_1, p_2, p_3 \in \mb{P}$, when $t \neq \pm 1$. The easiest example to compare the problems is when $t = p$ where $p$ is a prime number. In this case, the equation $p_1 - p_2 = p_3 - p$ always has the trivial monochromatic solution $p_1=p_2=p_3=p$. Nevertheless, as there always exists an arithmetic progression whose difference set avoids $\mb{P} - p$, we obviously cannot prove a corresponding difference set result in this case.

There has been a noticeable interest in understanding partition regularity problems over sparse sets. L\^e~\cite{Le12} proved a more general partition regularity result for linear equations over shifted primes, showing that any system of linear equations which is partition regular over $\mb{N}$ is also partition regular over $\mb{P}-1$. The author uses ideas from the paper of Green and Tao~\cite{GT08} concerning more general linear equations in the primes instead of Green's work on 3-term arithmetic progressions, together with Deuber’s work~\cite{Deu73} on partition regularity. On the other hand, a famous problem of Erd\H{o}s and Graham~\cite{Gra07} asking whether the equation $x + y = z$ is partition regular over perfect squares is still open.

The note is organised as follows. Section 2 is a list of notation. We summarise necessary estimates for the Fourier transform in Section 3, which are from the existing literature. In Section 4, we modify an iteration lemma of Ruzsa and Sanders and apply the lemma iteratively to locate shifted primes. The key bootstrapping lemma is shown in Section 5. In the final section, we apply the bootstrapping lemma from Section 5 to prove Theorem~\ref{improved PR bound}.

\section{Notation}
The set of primes is denoted by $\mb{P}$, and $\Lambda$ denotes the von Mangoldt function. For any positive integers $N,d$, let
\begin{equation}\label{def}
F_{N,d} (n) =  \Lambda(dn + 1) 1_{[N]} (n).
\end{equation}
Let $\chi$ be a Dirichlet character of modulus $q$. We use $L(s, \chi)$ to denote the associated Dirichlet $L$-function. Let
$$\psi(x; q,a) = \sum_{\substack{n \leq x \\ n \equiv a \pmod{q}}} \Lambda(n).$$

Let $f \in \ell^1 (\mb{Z})$. The Fourier transform of $f$ is defined as the function $\widehat{f} : \mb{T} \to \mb{C}$ given by
$$\widehat{f} (\theta) = \sum_{x \in \mb{Z}} f(x) e(-x\theta),$$
where $e(\theta) = e^{2 \pi i \theta}.$ The convolution of two functions $f,g \in \ell^1 (\mb{Z})$ is defined by
$$f * g (x) = \sum_{y \in \mb{Z}} f(x-y) g(y).$$

\section{Fourier transform of $F_{N,d}$ }
In this section, we summarise and simplify several existing results from the literature~\cite{RS08, Wan20}.

\subsection{Preliminaries}

Using the classical zero-free region~\cite[Theorem 5.26]{IK04} and a result of Landau~\cite[Theorem 5.28]{IK04}, one can show the following lemma.
\begin{lemma}\label{dichotomy lemma}
There exists a positive constant $c$ such that the following holds for any $D \geq 2$. If there exists a primitive character $\chi_D$ such that $\chi_D$ has modulus $d_D \leq D$ and $L(s,\chi_D)$ has a zero $\beta_D$ in the region
\begin{equation}\label{lem 3.1 region}
\Re(s) \geq 1 - \dfrac{c}{\log(D(|\Im(s)| + 3))},
\end{equation}
then 
\begin{enumerate}[(i)]
\item the zero $\beta_D$ is real and simple, and it is the only zero of $L(s,\chi_D)$ in the region (\ref{lem 3.1 region});
\item there does not exist any other primitive character $\chi$ of modulus\footnote{The absolute constant $10$ is chosen for the purpose of proving Lemma~\ref{iteration}.} $q \leq D^{10}$ such that $L(s,\chi)$ has a zero in the region (\ref{lem 3.1 region}).
\end{enumerate}

\end{lemma}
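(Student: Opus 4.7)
The plan is to deduce both parts directly from the two ingredients cited just above the statement, with the constant $c$ chosen small enough that all the regions nest correctly.

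For part (i) I apply the classical zero-free region (Theorem 5.26 of \cite{IK04}) to $L(s,\chi_D)$ itself: there is an absolute constant $c_0 > 0$ such that $L(s,\chi_D)$ has at most one zero in $\Re(s) \geq 1 - c_0/\log(d_D(|\Im(s)|+3))$, and any such zero is real and simple (so $\chi_D$ is a real character). Since $d_D \leq D$, taking $c \leq c_0$ forces the region~(\ref{lem 3.1 region}) to lie inside this classical zero-free region, and (i) follows immediately.

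For part (ii) I argue by contradiction. Suppose $\chi \neq \chi_D$ is a primitive character of modulus $q \leq D^{10}$ and $L(s,\chi)$ has a zero $\rho = \beta + i\gamma$ in (\ref{lem 3.1 region}). Because $q \leq D^{10}$ and $D \geq 2$, one has $\log(q(|\gamma|+3)) \leq 10 \log(D(|\gamma|+3))$, so taking $c \leq c_0/10$ places $\rho$ inside the classical zero-free region of $L(s,\chi)$ as well. By the same theorem $\rho$ must then be the exceptional real zero of $L(s,\chi)$, so $\chi$ is real and $\gamma = 0$. Now $\chi_D$ and $\chi$ are distinct real primitive characters whose L-functions each admit a real zero in~(\ref{lem 3.1 region}); applying Landau's theorem (Theorem 5.28 of \cite{IK04}) to this pair produces an absolute constant $c_1 > 0$ with
$$\min(\beta_D, \beta) \leq 1 - \frac{c_1}{\log(d_D q)} \leq 1 - \frac{c_1}{11 \log D}.$$
On the other hand $\beta_D, \beta \geq 1 - c/\log(3D) \geq 1 - 3c/\log D$ for $D \geq 2$, which contradicts the displayed bound as soon as $c < c_1/33$. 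Choosing $c = \min(c_0/10,\, c_1/34)$ at the outset therefore secures both (i) and (ii).

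The mathematical content is packaged entirely in the two cited theorems; the only step requiring some care is bookkeeping the constants so that (\ref{lem 3.1 region}) nests inside the classical zero-free region for every modulus $q \leq D^{10}$ while still beating the Landau repulsion range. The exponent $10$ in $D^{10}$ enters only through this constant chase, consistent with the footnote's remark that it is chosen for the downstream application.
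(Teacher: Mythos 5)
Your proof is correct and follows exactly the route the paper indicates, namely invoking the classical zero-free region (Iwaniec--Kowalski Theorem 5.26) to nest~(\ref{lem 3.1 region}) inside it for all moduli up to $D^{10}$, and then using Landau's repulsion estimate (Theorem 5.28) to eliminate a second real character. The constant chase you carry out (requiring $c \leq c_0/10$ for the nesting and $c < c_1/33$ for the repulsion contradiction) is precisely the bookkeeping the paper leaves implicit, and it is done correctly.
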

As a consequence of the lemma above, the following definition is exhaustive.
\begin{definition}\label{dichotomy}
Let $c$ be as in Lemma~\ref{dichotomy lemma} and $D \geq 2$. We say that $D$ is {\it exceptional} if there exists a unique primitive character $\chi_D$ such that $\chi_D$ has modulus $d_D \leq D$, and $L(s,\chi_D)$ has a zero $\beta_D$ which is real and simple and satisfies $\beta_D \geq 1 - c/ \log (3D)$. We call $\chi_D$ the {\it exceptional character} and $\beta_D$ the {\it exceptional zero}. Otherwise, we say that $D$ is {\it unexceptional}.
\end{definition}

If $\chi \pmod{q}$ has an exceptional zero $\beta$, then it follows from~\cite[Theorem 5.28]{IK04} that
\begin{equation}\label{class number formula}
1 - \beta \gg q^{-1/2},
\end{equation}
where the implicit constant is effective. We can use (\ref{class number formula}) and a prime number theorem for arithmetic progressions to prove the following estimate for the Fourier transform of $F_{N,d}$ at $0$.
\begin{prop}\label{FT at zero}
Let $D \geq 2$ and let $N, d$ be positive integers. Suppose that either
\begin{enumerate}
\item $D$ is unexceptional, $\bar{d}=1$ and $d \leq D$,
\item or $D$ is exceptional, $\bar{d} = d_D$ and $d \leq D^9.$
\end{enumerate}
Then
$$|\widehat{F_{N,\bar{d}d}} (0)| \gg \dfrac{ dN}{ \phi(\bar{d}d)} + O\left( \bar{d}dN \exp \left( - c\dfrac{\log (\bar{d}dN)}{\sqrt{\log(\bar{d}dN)} + \log(\bar{d}d)} \right) (\log (\bar{d} d ))^4 \right).$$
Here $c$ is a positive absolute constant which does not depend on $N,D,d$.
\end{prop}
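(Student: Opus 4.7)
The plan is to reduce this Fourier coefficient to a prime counting problem and then invoke an effective prime number theorem in arithmetic progressions. Writing $q = \bar d d$, the $n=0$ contribution vanishes because $\Lambda(1) = 0$, so
$$\widehat{F_{N,q}}(0) \;=\; \sum_{n=1}^{N} \Lambda(qn+1) \;=\; \psi(qN+1;\,q,\,1).$$
For the analytic input I would start from the truncated explicit formula, which expresses $\psi(x;q,1)$ as
$$\frac{1}{\phi(q)}\Big(x - \sum_{\chi \pmod q} \sum_{\substack{\rho : L(\rho,\chi^*) = 0 \\ |\Im \rho| \leq T}} \frac{x^\rho}{\rho}\Big) + R(x,q,T),$$
where $\chi^*$ denotes the primitive character inducing $\chi$ and $R(x,q,T)$ is the standard truncation error. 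Lemma~\ref{dichotomy lemma} then tells me which zeros can sit in the classical region $\Re s \geq 1 - c/\log(q(|\Im s|+3))$: in case (1) the modulus $d \leq D$ is unexceptional, so no zero of any $L(s,\chi^*)$ with $\chi \pmod d$ lies in the region; in case (2) we have $\bar d d \leq D^{10}$, and part~(ii) of Lemma~\ref{dichotomy lemma} singles out exactly one zero $\beta_D$ in the region, contributed by the character mod $\bar d d$ induced by $\chi_D$. Shifting the contour and choosing $T$ to balance the zero-free region against the height cutoff yields the error term in the exact shape claimed.

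With this framework in place, case~(1) immediately delivers main term $dN/\phi(d)$, matching the statement. In case~(2) the main term is
$$M := \frac{1}{\phi(\bar d d)}\left(\bar d d N - \frac{(\bar d d N)^{\beta_D}}{\beta_D}\right),$$
which I would lower-bound as follows. Setting $x = \bar d d N$ and $\alpha = 1 - \beta_D$, we have $x - x^{\beta_D}/\beta_D = \int_0^x (1 - t^{-\alpha})\,dt$, and two sub-cases arise. If $\alpha \log x \geq 1$, then on $[e^{1/\alpha}, x]$ the integrand exceeds $1 - 1/e$, giving $x - x^{\beta_D}/\beta_D \gg x \geq dN$. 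If instead $\alpha \log x < 1$, using $1 - e^{-u} \geq u/2$ for $u \in [0,1]$ gives $x - x^{\beta_D}/\beta_D \gg \alpha x \log x$, and Siegel's bound $\alpha \gg \bar d^{-1/2}$ from~(\ref{class number formula}) upgrades this to $\gg \bar d^{1/2} d N \log x \geq dN$. In either regime, $M \gg dN/\phi(\bar d d)$, which is the required lower bound.

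The main obstacle is keeping the error term in the precise form stated. The exponent $\log(\bar d d N)/(\sqrt{\log(\bar d d N)} + \log(\bar d d))$ encodes the transition between the regime $\bar d d$ small relative to $\bar d d N$, where the standard $\exp(-c\sqrt{\log x})$ saving applies, and the regime $\bar d d$ close to $\bar d d N$, where the zero-free region degrades to width $c/\log(\bar d d)$ and the best available saving becomes $\exp(-c\log x/\log(\bar d d))$. Choosing the truncation $T$ adaptively balances the two, while the $(\log \bar d d)^4$ factor absorbs the character-sum, zero-counting and prime-power contributions along the way. These manipulations are technical but routine given Lemma~\ref{dichotomy lemma}, and essentially reproduce the derivations in~\cite{RS08, Wan20} adapted to the normalization $F_{N,d}(n) = \Lambda(dn+1)1_{[N]}(n)$.
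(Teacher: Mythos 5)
Your proposal follows essentially the same route as the paper: reduce $\widehat{F_{N,\bar{d}d}}(0)$ to $\psi(\bar{d}dN+1;\bar{d}d,1)$, invoke the explicit formula with the dichotomy from Lemma~\ref{dichotomy lemma} governing which zeros can appear, and lower-bound the exceptional main term via the class number bound~(\ref{class number formula}). The only real difference is in that last lower-bounding step: the paper uses the single elementary inequality $1 - (\bar d dN)^{\beta_D-1}/\beta_D \geq 1-\beta_D$ (checked by differentiating, valid for $N\geq 100$ and $\beta_D\geq 1/2$) and then applies~(\ref{class number formula}) once, whereas you rewrite the main term as $\int_0^x(1-t^{-\alpha})\,dt$ with $x=\bar d d N$, $\alpha=1-\beta_D$ and split into two regimes on $\alpha\log x$. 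Your version works but is slightly longer and has one loose edge: with the threshold $\alpha\log x\geq 1$, the boundary case gives $e^{1/\alpha}=x$ and your bound $(1-e^{-1})(x-e^{1/\alpha})$ degenerates to zero; taking $\alpha\log x\geq 2$ (so that $e^{1/\alpha}\leq\sqrt x$ and $x-e^{1/\alpha}\gg x$), or simply adopting the paper's one-line inequality, fixes this. Modulo that cosmetic repair the argument is correct and the treatment of the error term's shape (balancing the $\exp(-c\sqrt{\log x})$ and $\exp(-c\log x/\log(\bar d d))$ regimes, with $(\log\bar d d)^4$ absorbing the auxiliary sums) matches what the paper intends.
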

\begin{proof}[Sketch of the proof]
The unexceptional case is a direct consequence of the definition of Fourier transform and a well-known prime number theorem for arithmetic progressions~\cite[Theorem 5.27]{IK04}. When $D$ is exceptional, we have 
\begin{multline*}
 \widehat{F_{N,\bar{d}d}} (0) = \psi(\bar{d}dN + 1; \bar{d}d, 1) \\
=  \dfrac{\bar{d}dN}{\phi(\bar{d}d)} - \dfrac{(\bar{d}dN)^{\beta_D}}{\phi(\bar{d}d)\beta_D} + O\left( \bar{d}dN \exp \left( - c\dfrac{\log (\bar{d}dN)}{\sqrt{\log(\bar{d}dN)} + \log(\bar{d}d)} \right) (\log (\bar{d} d))^4 \right).
\end{multline*}
Using the inequality $1 - (\bar{d} d N)^{\beta_D - 1}/\beta_D \geq 1-\beta_D$ which holds\footnote{This can be verified by taking derivatives of both sides and noticing the upper bound (\ref{class number formula}) for $\beta_D$.} for $N \geq 100$ and $\beta_D \geq 1/2$, we conclude that 
\begin{equation}\label{FT at 0 proof}
 \widehat{F_{N,\bar{d}d}} (0) \gg \dfrac{\bar{d} d N}{\phi(\bar{d}d)} ( 1- \beta_D) + O\left( \bar{d}dN \exp \left( - c\dfrac{\log (\bar{d}dN)}{\sqrt{\log(\bar{d}dN)} + \log(\bar{d}d)} \right) (\log (\bar{d} d))^4 \right).
\end{equation}
Combining the inequality above with (\ref{class number formula}) yields the proposition.
\end{proof}

\subsection{The major arcs}

The following major arc estimate is used in the proof of the iteration lemma. 
\begin{prop}\label{major arcs prop}
Let $D \geq 2$. Let $N, d, a, q$ be positive integers such that $(a,q) =1$. Suppose that either
\begin{enumerate}
\item $D$ is unexceptional, $\bar{d}=1$ and $dq \leq D$,
\item or $D$ is exceptional, $\bar{d} = d_D$ and $dq \leq D^9.$
\end{enumerate}
Then for any $\delta \in [-1/2, 1/2]$, we have
\begin{multline*}
\left| \widehat{F_{N,\bar{d}d}} \left( \dfrac{a}{q} + \delta \right) \right| \ll \dfrac{ |\widehat{F_{N,\bar{d}d}} (0)| }{\phi(q)} + O \left( (1 + |\delta| N) \bar{d}dq N \exp\left( - c \dfrac{ \log (\bar{d}dq^2N)}{\sqrt{\log N} + \log D} \right) \log^4 (\bar{d}dq) \right).
\end{multline*}
Here $c$ is a positive absolute constant which does not depend on $N,D,d,a,q,\delta$.
\end{prop}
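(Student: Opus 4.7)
The plan is to decompose $\widehat{F_{N,\bar d d}}(a/q + \delta)$ into residue classes modulo $q$, apply the prime number theorem for arithmetic progressions of modulus $\bar d d q$ (admissible in the stated range by Lemma~\ref{dichotomy lemma}), and dispose of the $e(-n\delta)$ twist via Abel summation. I would begin by writing
\[
\widehat{F_{N,\bar d d}}\!\left(\tfrac{a}{q} + \delta\right) = \sum_{b=0}^{q-1} e(-ab/q) \sum_{\substack{n \le N \\ n \equiv b \,(q)}} \Lambda(\bar d d n + 1)\, e(-n\delta).
\]
Only residues $b$ with $(\bar d d b + 1, q) = 1$ contribute, and for these the inner partial sum
\[ S_b(x) := \psi(\bar d d x + 1;\, \bar d d q,\, \bar d d b + 1) \]
is amenable to the PNT in APs. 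Abel summation converts the twisted inner sum into $S_b(N) e(-N\delta) + 2\pi i \delta \int_0^N S_b(x) e(-x\delta)\,dx$, paying an overall factor $1 + |\delta|N$.

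I would then invoke the PNT in APs with the zero-free region of Lemma~\ref{dichotomy lemma}, noting that in the exceptional case the exponent $10$ in part (ii) of that lemma is precisely what permits $\bar d d q \le D^{10}$. This yields
\[
S_b(x) = \frac{\bar d d x + 1}{\phi(\bar d d q)} - \epsilon\, \chi_D(\bar d d b + 1)\, \frac{(\bar d d x + 1)^{\beta_D}}{\phi(\bar d d q)\, \beta_D} + O(\mathcal E(x)),
\]
where $\epsilon$ is $0$ or $1$ according as $D$ is unexceptional or exceptional, and $\mathcal E$ is the standard PNT-in-APs error attached to modulus $\bar d d q$. The key simplification is that $\bar d d b + 1 \equiv 1 \pmod{\bar d}$ and the primitive character $\chi_D$ has conductor $\bar d = d_D$, so $\chi_D(\bar d d b + 1) = 1$; hence the combined main-and-exceptional piece of $S_b(x)$ is independent of $b$.

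Summing over $b$ with the weight $e(-ab/q)$ therefore factors as a common size times the character sum
\[
\sum_{\substack{0 \le b < q \\ (\bar d d b + 1, q) = 1}} e(-ab/q),
\]
which, after the substitution $c = \bar d d b + 1$, is (up to a unit phase) a Ramanujan sum in $c_q$ of modulus at most $1$ since $(a,q)=1$. Combining this with the inequality $\phi(\bar d d)/\phi(\bar d d q) \le 1/\phi(q)$ and the algebraic identity used in the proof of Proposition~\ref{FT at zero} to relate the main-plus-exceptional expression to $|\widehat{F_{N,\bar d d}}(0)|$ then bounds the principal contribution by $|\widehat{F_{N,\bar d d}}(0)|/\phi(q)$ up to an absolute constant. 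The error contribution is $q(1+|\delta|N)\max_{x \le N}\mathcal E(x)$, and substituting the standard shape of $\mathcal E$ produces the error in the statement.

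I expect the main obstacle to be the handling of the combined main-plus-exceptional term rather than any single analytic step: in the exceptional case these two terms have opposite signs and comparable sizes, so one cannot bound each in isolation and the argument must mirror the manipulation in the proof of Proposition~\ref{FT at zero}. A secondary subtlety is the Ramanujan-sum evaluation when $(\bar d d, q) > 1$, which forces one to track the $\gcd$ carefully in the character-sum step.
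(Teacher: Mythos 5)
Your proposal matches the paper's sketch essentially step by step: decomposition into residue classes mod $q$, the PNT in APs of modulus $\bar d d q$ (admissible via Lemma~\ref{dichotomy lemma}), partial summation to absorb the $\delta$-twist at cost $1+|\delta|N$, the observation that $\chi_D(\bar d d b + 1) = 1$ because the conductor is $\bar d$ (so the exceptional and principal pieces factor off the $b$-sum), the Ramanujan-sum bound on $\sum_{(\bar d d b + 1, q)=1} e(-ab/q)$ with the vanishing when $(\bar d d, q)>1$, and the final comparison with $|\widehat{F_{N,\bar d d}}(0)|$ via the lower bound of Proposition~\ref{FT at zero}. The subtleties you flag (sign cancellation between the main and Siegel terms in the exceptional case, and the $\gcd$ bookkeeping in the Ramanujan sum) are exactly the points the paper's proof attends to, so this is the same argument.
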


\begin{proof}[Sketch of the proof]
We sketch a proof for the exceptional case. The unexceptional case can be shown in a similar manner. By a change of variables, one can show that
\begin{align*}
\widehat{F_{N,\bar{d}d}} \left(\dfrac{a}{q} + \delta \right) = & \sum_{m = 0}^{q-1} e \left( - \dfrac{am}{q} \right) \sum_{\substack{l \leq \bar{d}dN+1 \\ l \equiv 1 \pmod{\bar{d}d} \\ \frac{l-1}{\bar{d}d} \equiv m \pmod{q}}}  \Lambda (l)  e^{ -2\pi i \delta (l-1)/(\bar{d}d)}.
\end{align*}
It then follows from Lemma~\ref{dichotomy lemma}, a prime number theorem for arithmetic progressions~\cite[Theorem 5.27]{IK04} and standard analytic number theory techniques (such as integration by parts and partial summation) that
\begin{align*}
&\widehat{F_{N,\bar{d}d}} \left(\dfrac{a}{q} + \delta \right) \\
 = & \dfrac{1}{\phi(\bar{d}dq)} \sum_{m=0}^{q-1} e \left(-\dfrac{am}{q} \right) \int_{1}^{\bar{d}dN+1} e^{-2\pi i \delta t / (\bar{d} d)}\left( \overline{\chi_0 (\bar{d}dm+1)}- \overline{\chi_0\chi_D (\bar{d}dm+1)} t^{\beta_D-1} \right) dt \\
 + & O \left( (1+ | \delta |N ) \bar{d} dq^2 N \exp\left( - c \dfrac{ \log (\bar{d}dqN)}{\sqrt{\log N} + \log D} \right) \log^4 (\bar{d}dq)\right),
\end{align*}
where $\chi_0$ is the principal character of modulus $\bar{d}dq$. Since $\chi_0\chi_D(\bar{d} k + 1) =\chi_D(1) = 1$ for every nonnegative integer $k$ such that $(\bar{d} k + 1, \bar{d}dq)=1$, and $\chi_0\chi_D(\bar{d} k + 1) =0$ otherwise, it follows that 
\begin{align*}
& \sum_{m=0}^{q-1} e \left(-\dfrac{am}{q} \right) \int_{1}^{\bar{d}dN+1} e^{-2\pi i \delta t / (\bar{d} d)}\left( \overline{\chi_0(\bar{d}dm+1)} - \overline{\chi_0 \chi_D (\bar{d}dm+1)} t^{\beta_D-1} \right) dt  \\
= & \int_{1}^{\bar{d}dN+1} e^{-2\pi i \delta t / (\bar{d} d)}\left( 1- t^{\beta_D-1} \right) dt \sum_{\substack{m=0 \\ (\bar{d}dm+1,q)=1}}^{q-1} e \left(-\dfrac{am}{q} \right).
\end{align*}
Notice that the inner sum over $m$ is $0$ if $(\bar{d}d,q) >1$, and it follows from the well-known bound for Ramanujan sum (see Iwaniec--Kowalski~\cite[(3.3)]{IK04}) that its absolute value is bounded by $1$ if $(\bar{d}d,q) = 1$.

Since $1 - t^{\beta_D - 1} \geq 1 - \beta_D$ for all $t \gg 1$, it follows from H\"older's inequality that
\begin{align*}
&\left| \widehat{F_{N,\bar{d}d}} \left(\dfrac{a}{q} + \delta \right) \right| \leq  \dfrac{1}{\phi(\bar{d}d)\phi(q)} (1 - \beta_D) \bar{d} d N \\
 + & O \left( (1+ | \delta |N ) \bar{d} dq^2 N \exp\left( - c \dfrac{ \log (\bar{d}dqN)}{\sqrt{\log N} + \log D} \right) \log^4 (\bar{d}dq)\right).
\end{align*}
Comparing the inequality above with (\ref{FT at 0 proof}) yields the result.
\end{proof}

\subsection{The minor arcs} The minor arc estimate we need is a rather well-known consequence of the minor arc estimate of Vinogradov (see, for instance, Iwaniec and Kowalski~\cite[Theorem 13.6]{IK04}).
\begin{prop}\label{minor arc estimate}
Let $N, Q, d$ be positive integers such that $d \leq N$. For any positive integers $q,a$ such that $q \leq Q$ and $(a,q)=1$ and any $ \left| \theta - {a}/{q}\right| \leq {1}/({qQ})$, we have
$$| \widehat{F_{N,d}} (\theta)| \ll d (\log N)^4 \left( \dfrac{N}{\sqrt{q}} + N^{4/5} + \sqrt{NQ} \right).$$
\end{prop}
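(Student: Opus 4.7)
The plan is to reduce the estimate to Vinogradov's classical minor arc bound for $\sum_{m \leq X} \Lambda(m) e(m\alpha)$ (\cite[Theorem 13.6]{IK04}), via a change of variables and a Dirichlet character decomposition. Unfolding the definition and substituting $m = dn+1$, one first writes
$$\widehat{F_{N,d}}(\theta) = e(\theta/d) \sum_{\substack{m \leq dN+1 \\ m \equiv 1 \pmod d}} \Lambda(m) e(-m\theta/d).$$
Since the congruence $m \equiv 1 \pmod d$ forces $(m,d) = 1$, orthogonality of Dirichlet characters modulo $d$ rewrites this as
$$\widehat{F_{N,d}}(\theta) = \frac{e(\theta/d)}{\phi(d)} \sum_{\chi \pmod d} \sum_{m \leq dN+1} \Lambda(m) \chi(m) e(-m\theta/d).$$

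For each character $\chi$, I would expand $\chi(m)$ via Gauss sums (with the usual adjustments for imprimitive $\chi$), converting the twisted sum into a linear combination of at most $d$ pure exponential sums $\sum_{m \leq dN+1} \Lambda(m) e(m\beta)$ at frequencies $\beta = (k - \theta)/d$ for integers $k$ modulo $d$. For each such $\beta$, the hypothesis $|\theta - a/q| \leq 1/(qQ)$ with $(a,q) = 1$ yields the approximation $|\beta - (kq - a)/(dq)| \leq 1/(dqQ)$. Reducing $(kq - a)/(dq)$ to lowest terms gives $a'/q'$ with $q' = dq/\gcd(kq - a, dq)$; since $\gcd(kq - a, q) = \gcd(-a, q) = 1$, the gcd in question divides $d$, so $q \leq q' \leq dq$.

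Applying Vinogradov's theorem at each frequency (passing to a smaller Dirichlet approximation via Dirichlet's theorem if needed to secure the hypothesis $|\beta - a'/q'| \leq 1/(q')^2$) gives
$$\Big| \sum_{m \leq dN+1} \Lambda(m) e(m\beta) \Big| \ll (\log N)^4 \left(\frac{dN}{\sqrt{q'}} + (dN)^{4/5} + \sqrt{dN q'}\right) \ll d(\log N)^4 \left(\frac{N}{\sqrt{q}} + N^{4/5} + \sqrt{NQ}\right),$$
using $q' \geq q$ for the first term, $d \leq N$ for the second, and $q' \leq dq \leq dQ$ for the third. Summing over the at most $\phi(d) \cdot d$ character--frequency pairs and dividing by $\phi(d)$ absorbs the extra factor $d$ into the stated constants, yielding the claimed bound.

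The only non-routine piece is the bookkeeping around the reduced denominators $q'$, and in particular the edge case $dq > Q$, where the natural approximation $(kq-a)/(dq)$ may fail the exact hypothesis $|\beta - a'/q'| \leq 1/(q')^2$ and must be swapped for a different Dirichlet approximation of $\beta$. This is standard and does not affect the final shape of the estimate; the rest of the argument is entirely routine.
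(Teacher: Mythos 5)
The overall strategy (change of variables to an exponential sum over $\Lambda$, then Vinogradov) is the right one, and the reduced-denominator bookkeeping $q \leq q' \leq dq$ is correct. But there are two issues, one of which is a genuine loss.

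\textbf{The multiplicative character decomposition over-counts by a factor of about $\sqrt{d}$.} For a primitive $\chi$ modulo $d$, the Gauss sum identity $\chi(m) = \tau(\bar\chi)^{-1}\sum_b\bar\chi(b)e(bm/d)$ has $\phi(d)$ nonzero terms each of modulus $1/\sqrt{d}$, so the total mass on the right side after taking absolute values is $\phi(d)/\sqrt{d}\asymp\sqrt{d}$, not $1$. When you then apply the triangle inequality term by term and sum over $\chi$, the final bound is $\phi(d)^{-1}\cdot\phi(d)\cdot(\phi(d)/\sqrt{d})\cdot\max_a|S_a|$, which, together with the factor $d$ that already appears in each $|S_a|\ll d(\log N)^4(\cdots)$ (coming from $M=dN$ in the Vinogradov bound), yields roughly $d^{3/2}(\log N)^4(\cdots)$, a factor $\sqrt{d}$ too large. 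Your bookkeeping sentence (``$\phi(d)\cdot d$ pairs, divide by $\phi(d)$'') is off precisely because it drops the $|\tau(\bar\chi)|^{-1}=d^{-1/2}$ normalisation and the fact that $|\chi(m)|=1$ only thanks to cancellation in that Gauss sum, which the triangle inequality destroys. There is also the separate nuisance that imprimitive characters (including the principal one) have no clean Gauss-sum expansion. All of this is avoided by detecting the congruence additively: $1_{m\equiv1\ (d)}=\tfrac1d\sum_{j=0}^{d-1}e(j(m-1)/d)$, so
$$\big|\widehat{F_{N,d}}(\theta)\big|\le\frac1d\sum_{j=0}^{d-1}\Big|\sum_{m\le dN+1}\Lambda(m)e(m\beta_j)\Big|\le\max_{0\le j<d}|S_j|,\qquad\beta_j=\frac{j-\theta}{d},$$
and the factor $1/d$ exactly cancels the $d$ frequencies, leaving the single factor $d$ inside each $|S_j|$ coming from $M=dN$ and $q\le q'\le dq$. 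This is the decomposition that gives the stated bound.

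\textbf{The ``edge case'' $dq>Q$ needs a concrete argument, and the one you sketch is not quite right.} When $q'^2>dqQ$ you cannot use $a'/q'$ directly, but simply ``passing to a smaller Dirichlet approximation'' does not by itself secure the lower bound on the new denominator that the first Vinogradov term $M/\sqrt r$ requires. The clean fix is to apply Dirichlet's theorem with parameter $R=2dq$: this gives $b/r$ with $r\le 2dq$ and $|\beta_j-b/r|\le 1/(rR)\le 1/r^2$, so Vinogradov applies, and the third term $\sqrt{Mr}\le d\sqrt{2Nq}\le d\sqrt{2NQ}$ is fine. For the first term you then \emph{prove} $r\ge q/2$: either $b/r=a'/q'$, in which case $r=q'\ge q$; or $b/r\ne a'/q'$, in which case $\frac{1}{rq'}\le\frac{1}{2rdq}+\frac{1}{dqQ}$, and using $q'\le dq$ this rearranges to $r\ge Q/2\ge q/2$. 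With this in place the argument closes, giving $|S_j|\ll(\log N)^4\big(dN/\sqrt q+(dN)^{4/5}+d\sqrt{NQ}\big)\ll d(\log N)^4(N/\sqrt q+N^{4/5}+\sqrt{NQ})$. The paper itself treats the proposition as a standard corollary of Vinogradov and does not spell out a proof, but the intended route is the additive one above; your multiplicative route would, as written, deliver the weaker exponent $d^{3/2}$.
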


\section{Locating shifted primes in difference sets}
This section consists of two lemmas. The first one is an iteration lemma used in the difference set problem. Then we iterate this result to deduce the second one.

The set up of the lemma differs depending on whether a possible exceptional zero occurs or not. In the exceptional case, the iteration starts from an arithmetic progression whose common difference is a multiple of the exceptional modulus. This is motivated by considering the model situation where the Fourier transform is concentrated near the arcs whose denominators are multiples of the exceptional modulus.

We need some notation to describe the circle method. Let $Q$ be a positive integer. For any integers $q \leq Q$ and $1 \leq a \leq q$, we define
$$\mfr{M}_{a,q} = \left\{ \theta \in \mb{T}: \left| \theta - \dfrac{a}{q} \right| \leq \dfrac{1}{qQ} \right\}$$
and
$$\mfr{M}^*_q = \bigcup_{(a,q) = 1, 1 \leq a \leq q} \mfr{M}_{a,q}.$$
By Dirichlet's pigeonhole principle, we have
$$\mb{T} = \bigcup_{q \leq Q} \mfr{M}^*_q.$$

The following result of Ruzsa and Sanders~\cite[Corollary 7.3]{RS08} is used to locate density increment. It is purposely designed according to the major arc estimate for $\widehat{F_{N,d}}$.
\begin{prop}\label{Cor 7.3 RS}
Let $N$ be a positive integer and $A \subseteq [N]$ have density $\alpha >0$. Let $Q \geq Q_1 \geq 1$. Suppose
$$\alpha^{-1} |A|^{-1} \sum_{q = 1}^{Q_1} \dfrac{1}{\phi(q)}  \int_{\mfr{M}_q^* } |(1_A - \alpha 1_{[N]} ) \;\widehat{} \;(\theta) |^2 d \theta \geq c$$
for some $c>0$, then there exists an arithmetic progression $P$ with common difference $q \leq Q_1$ and $|P| \gg Q_1^{-1} \min \{ Q, c \alpha N\}$ such that $|A \cap P| \geq \alpha (1 + 2^{-5} c) |P|$.
\end{prop}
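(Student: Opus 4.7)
The plan is to deduce the density increment from the $L^2$ Fourier concentration via a dual convolution argument, in the style of the Fourier proof of Roth's theorem. Set $f := 1_A - \alpha 1_{[N]}$, so that the hypothesis reads
\begin{equation*}
\sum_{q=1}^{Q_1} \frac{1}{\phi(q)} \int_{\mfr{M}_q^*} |\widehat{f}(\theta)|^2 \, d\theta \geq c\alpha|A|.
\end{equation*}

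The first step is to construct a convolution kernel whose Fourier transform is concentrated on the major arcs, so that Plancherel transfers this $L^2$ lower bound to the physical side. Fix a positive integer $L \asymp Q_1^{-1} \min(Q, c\alpha N)$ (with a small enough implicit constant) and, for each $q \leq Q_1$, let $\psi_q := 1_{\{q, 2q, \ldots, qL\}}$. For $\theta = a/q + \delta \in \mfr{M}_{a,q}$ with $(a,q)=1$ the integer phases $e(-ka)$ drop out, leaving $\widehat{\psi_q}(\theta) = \sum_{k=1}^{L} e(-kq\delta)$; since $|qL\delta| \leq L/Q \leq 1/Q_1 \leq 1$, this geometric sum has magnitude $\gg L$ uniformly on $\mfr{M}_q^*$. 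Applying Plancherel to $f * \psi_q$, averaging with weights $1/\phi(q)$ against the hypothesis, and pigeonholing in $q$ then furnishes a modulus $q \leq Q_1$ with
\begin{equation*}
\sum_t |(f * \psi_q)(t)|^2 \gg \phi(q) L^2 \cdot \frac{c \alpha |A|}{Q_1}.
\end{equation*}

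The convolution has direct combinatorial meaning: $(f*\psi_q)(t) = |A \cap P_t| - \alpha|P_t \cap [N]|$, where $P_t := \{t-q, t-2q, \ldots, t-qL\}$ is an AP of common difference $q$ and length $L$, and for $t$ in the safe range $T := [qL+1, N+q]$ this simplifies to $|A \cap P_t| - \alpha L$, so sufficiently positive values on $T$ are precisely APs with density increment. The mean-zero identity $\sum_t (f*\psi_q)(t) = \widehat{f}(0) \widehat{\psi_q}(0) = 0$, combined with the pointwise bound $|(f*\psi_q)(t)| \leq L$ and the $L^2$ lower bound above, allows a positive/negative split (via the Cauchy--Schwarz consequence $\|g\|_1 \geq \|g\|_2^2/\|g\|_\infty$) producing some $t_0 \in T$ with $(f*\psi_q)(t_0) \geq 2^{-5}c\alpha L$, yielding $|A \cap P_{t_0}|/|P_{t_0}| \geq \alpha(1 + 2^{-5}c)$ for $P := P_{t_0}$.

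The main obstacle I expect is the calibration of the length parameter $L$, together with the concrete constants in the popular-translate extraction. The length $L$ must respect two competing constraints: $L \ll Q/Q_1$, so that $|\widehat{\psi_q}|^2 \gg L^2$ uniformly throughout every arc $\mfr{M}_{a,q}$ with $q \leq Q_1$, and $L \ll c\alpha N/Q_1$, so that the contribution of the boundary $T^c$ to $\|f*\psi_q\|_2^2$ and $\|f*\psi_q\|_1$ is dominated by the main term; the minimum of these two quantities is precisely the length bound appearing in the conclusion. Arranging the $L^1/L^2/L^\infty$ accounting tightly enough to read off the stated constant $2^{-5}c$ rather than a worse power of $c$ is where the care is required.
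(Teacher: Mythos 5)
Your setup via the convolution kernels $\psi_q$ and Plancherel is sensible, and the claim that $|\widehat{\psi_q}|\gg L$ uniformly on $\mfr{M}_q^*$ is correct provided $L\ll Q/Q_1$ with a small enough implicit constant. However, the final step --- extracting a single translate $t_0$ with $(f*\psi_q)(t_0)\geq 2^{-5}c\alpha L$ --- does not follow from the $L^1/L^2/L^\infty$ accounting as you have set it up, and the loss is not merely a worse constant. After pigeonholing in $q$ you have $\|f*\psi_q\|_2^2\gg \phi(q)L^2c\alpha|A|/Q_1$, and the interpolation $\|g\|_1\geq\|g\|_2^2/\|g\|_\infty$ with the bound $\|g\|_\infty\leq L$ gives $\|g\|_1\gg \phi(q)Lc\alpha|A|/Q_1$, hence $\sum_t g(t)_+ \gg \phi(q)Lc\alpha|A|/Q_1$ by mean zero. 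But $g(t)_+>0$ for as many as $\Theta(N)$ values of $t$ (the translates $P_t$ overlap heavily, so this is a sum over all translates rather than a partition of $[N]$), so pigeonholing over $t$ gives only $\max_t g(t)_+ \gg \phi(q)Lc\alpha|A|/(Q_1 N) = \phi(q)c\alpha^2 L/Q_1$. This falls short of the target $2^{-5}c\alpha L$ by a factor of order $\alpha\,\phi(q)/Q_1$, which can be arbitrarily small. If instead you run the interpolation the other way, using the better bound $\|g\|_1 = 2\sum_t g(t)_+ \leq 2\alpha L N$ coming from $\sum_t|A\cap P_t|=L|A|$, you obtain $\|g\|_\infty\gg \phi(q)c\alpha L/Q_1$; this removes the spurious $\alpha$, but it only bounds $\max_t|g(t)|$, and since $g$ is bounded below merely by $-\alpha L$ a large value of $|g|$ may well be attained at a negative $g(t)$, which is a density \emph{decrement}. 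Converting that into a density increment requires a partition of $[N]$ into disjoint progressions (so that the mean-zero identity can be localised), which is exactly the piece of the argument missing from your writeup. Either way, the $\phi(q)/Q_1$ loss introduced by the naive pigeonhole in $q$ also remains.

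The underlying difficulty is that a constant-factor density increment $\alpha\to\alpha(1+2^{-5}c)$ is a strong conclusion, and it cannot be read off from an $L^2$ bound on a single modulus $q$ followed by a global $L^1/L^2/L^\infty$ split over all $N$ translates. The standard route (followed, in effect, by Ruzsa--Sanders and the Pintz--Steiger--Szemer\'edi/S\'ark\"ozy lineage) replaces the family of all translates by a partition of $[N]$ into disjoint progressions of common difference $q$ and length $\asymp L$, so that the number of progressions is $\asymp N/(qL)$ rather than $N$; the energy inequality then controls the variance of $|A\cap P|$ over this partition, and the mean-zero identity over the partition converts this into a one-sided increment without losing a factor of $\alpha$. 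To match the clean statement here, that argument also has to be organised so that it does not degrade by $\phi(q)/Q_1$ after isolating a single $q$, which needs a more careful treatment of the weighted sum over $q$ than a bare pigeonhole. For what it is worth, the paper does not supply its own proof of this proposition: it is imported verbatim from Ruzsa--Sanders~\cite[Cor.~7.3]{RS08}, so there is no proof in the text to compare against; but the gap above is genuine and your sketch would need the partition step and a sharper treatment of the $q$-sum to reach the stated conclusion.
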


The proof of the lemma below is essentially the same as the proof of Ruzsa and Sanders~\cite{RS08}. Instead of looking at whether the difference set contains any shifted primes, we quantify the number of shifted primes that can be located using the same idea.

\begin{lemma}\label{iteration}
There exist positive absolute constants $C_1, c_1$ such that we can obtain the following result.

Let $D \geq 2$ and let $N$ be a positive integer such that\footnote{We need to introduce an upper bound on $N$ due to the factor $(\log N)^4$ in the minor arc estimate.} $N \leq \exp(D^{1/10})$. Let $d$ be a positive integer. Let $\bar{d} = 1$ if $D$ is unexceptional, or $\bar{d} = d_D$ if $D$ is exceptional. 

Suppose $A \subseteq [N]$ has density $\alpha >0$. Suppose also that
\begin{equation}\label{1}
C_1 (\log D)^2 \leq \log N \text{ and }
\log d + \log \alpha^{-1} \leq C_1^{-1} \log D.
\end{equation}
Then one of the following assertions must be true.
\begin{enumerate}[(i)]
\item There exists an arithmetic progression $P'$ with common difference $\ll \alpha^{-3}$ and length $\gg (\alpha / \bar{d}d \log N)^8 N$ such that $|A \cap P'| \geq \alpha (1 + c_1) |P'|.$
\item There exists $N' \gg \alpha N$ such that
\begin{equation*}
\left| (A - A) \cap \left( \dfrac{\mb{P} - 1}{\bar{d} d} \right) \right| \geq \dfrac{ c_1 \alpha  N' }{ \bar{d} \log N'}.
\end{equation*}

\end{enumerate}
\end{lemma}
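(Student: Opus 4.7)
The strategy is to lower bound the $F_{N',\bar{d}d}$-weighted count
$$\tilde S := \sum_n (1_A \ast 1_{-A})(n)\, F_{N',\bar{d}d}(n) = \int_{\mb T} |\widehat{1_A}(\theta)|^2\, \widehat{F_{N',\bar{d}d}}(\theta)\, d\theta$$
with $N' \asymp \alpha N$ to be chosen. Since $(1_A \ast 1_{-A})(n) \le |A| = \alpha N$ and $\Lambda(\bar{d}dn+1) \ll \log N'$ under (\ref{1}), any lower bound $\tilde S \gg \alpha^2 N N'/\bar{d}$ yields case (ii) at once, through the pointwise estimate $\left| (A-A) \cap (\mb P - 1)/(\bar{d}d) \right| \ge \tilde S/(O(1)\cdot\alpha N \log N')$.

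Decompose $1_A = \alpha 1_{[N]} + f$ and expand $\tilde S = T_0 + T_1 + T_2$ with $T_0 := \alpha^2 \int |\widehat{1_{[N]}}|^2 \widehat{F_{N',\bar{d}d}}$, $T_1 := 2\alpha \Re \int \widehat{1_{[N]}}\overline{\widehat f}\,\widehat{F_{N',\bar{d}d}}$, and $T_2 := \int |\widehat f|^2 \widehat{F_{N',\bar{d}d}}$. Passing $T_0$ back to physical space and using $(1_{[N]} \ast 1_{-[N]})(n) \ge N/2$ for $|n| \le N/2$, together with Proposition~\ref{FT at zero}, yields the main-term bound
$$T_0 \ge \tfrac{\alpha^2 N}{2}\,\widehat{F_{N',\bar{d}d}}(0) \gg \alpha^2 N \cdot \tfrac{dN'}{\phi(\bar{d}d)} \gg \tfrac{\alpha^2 N N'}{\bar{d}},$$
which is exactly of the size we want. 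Hence it suffices to show $|T_1| + |T_2| \le T_0/2$.

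To this end choose $Q_1 \asymp \alpha^{-3}$ and $Q \asymp \alpha^5 N/(\bar{d}d\log N)^8$, so that $Q_1^{-1}\min\{Q, c\alpha N\} \asymp (\alpha/(\bar{d}d\log N))^8 N$ matches the AP-length required in (i). Apply Proposition~\ref{Cor 7.3 RS}: either its hypothesis is met with a sufficiently small absolute constant, yielding case (i) directly, or else
$$\sum_{q \le Q_1} \frac{1}{\phi(q)} \int_{\mfr M_q^*} |\widehat f|^2 \ll c\alpha^2 N.$$
In the latter case split $T_2$ across $\mfr M^*_{\le Q_1} = \bigcup_{q \le Q_1} \mfr M_q^*$ and its complement $\mfr m$. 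On each $\mfr M_q^*$, Proposition~\ref{major arcs prop} gives $|\widehat{F_{N',\bar{d}d}}(\theta)| \ll \widehat{F_{N',\bar{d}d}}(0)/\phi(q)$ up to a negligible error, so the major-arc contribution to $T_2$ is $\ll c\alpha^2 N \widehat{F_{N',\bar{d}d}}(0) = O(c)\, T_0$. On $\mfr m$, Proposition~\ref{minor arc estimate} gives $|\widehat{F_{N',\bar{d}d}}| \ll \bar{d}d \log^4 N'\,(N'/\sqrt{Q_1} + N'^{4/5} + \sqrt{N'Q})$, which combined with Parseval $\int_{\mfr m} |\widehat f|^2 \le \alpha N$ is also $O(c)\, T_0$ provided the constant $C_1$ in (\ref{1}) is taken small enough. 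Finally a Cauchy--Schwarz step gives $|T_1|^2 \le 4 T_0 \int |\widehat f|^2 |\widehat{F_{N',\bar{d}d}}| \ll c\, T_0^2$, so $|T_1| \ll \sqrt c\, T_0$. Taking $c$ small absolutely then yields $\tilde S \ge T_0/2$ and hence conclusion (ii).

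The main technical obstacle is the calibration of (\ref{1}). On the minor arcs the Vinogradov estimate carries a loss of $\bar{d}^2 d\log^4 N'$ that must be beaten by the saving $\alpha^{3/2} = 1/\sqrt{Q_1}$. The bound $\log d + \log\alpha^{-1} \le C_1^{-1}\log D$ keeps $\bar{d}^2 d$ and $\alpha^{-1/2}$ polynomial in $D$ of controlled exponents, while $C_1(\log D)^2 \le \log N$ combined with $N \le \exp(D^{1/10})$ keeps the $\log^4 N$ factor essentially negligible; a sufficiently small choice of the absolute constant $C_1$ reconciles all of these. The ranges of $d$ permitted in Propositions~\ref{FT at zero} and~\ref{major arcs prop} are likewise designed to accommodate the factor $Q_1 \asymp \alpha^{-3}$ in the major-arc denominators, which is why the constant $10$ in Lemma~\ref{dichotomy lemma} was chosen as it was.
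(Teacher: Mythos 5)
Your overall architecture is the right one and close to the paper's: compare the weighted count $\tilde S$ to the main term $T_0$, and either the energy of $f=1_A-\alpha 1_{[N]}$ piles up on low frequencies (apply Proposition~\ref{Cor 7.3 RS} for outcome (i)) or $\tilde S \gtrsim T_0$ and outcome (ii) follows. But the decomposition of the circle into just two pieces, $\mathfrak{M}^*_{\le Q_1}$ with $Q_1\asymp\alpha^{-3}$ and its complement $\mathfrak{m}$, has a genuine gap. On $\mathfrak{m}$ the only tool you invoke is the Vinogradov minor-arc bound of Proposition~\ref{minor arc estimate}. The worst case is $q$ just above $Q_1$, where that bound gives
\[
|\widehat{F_{N',\bar d d}}(\theta)|\;\ll\;\bar d d\,(\log N')^4\,\frac{N'}{\sqrt{Q_1}}\;\asymp\;\bar d d\,(\log N')^4\,\alpha^{3/2}N'.
\]
After multiplying by the Parseval bound $\int_{\mathfrak m}|\widehat f|^2\le\alpha N$ and comparing with $T_0\asymp\alpha^2 N N'/\bar d$, the ratio is $\bar d^2 d\,(\log N')^4\,\alpha^{1/2}$. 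Under (\ref{1}), $\log d+\log\alpha^{-1}\le C_1^{-1}\log D$, so $d\alpha^{-1}$ is small in terms of $D$, but $\bar d=d_D$ can be as large as $D$ itself, and $(\log N')^4$ can be as large as $D^{2/5}$ since the only upper bound on $N$ is $N\le\exp(D^{1/10})$. Thus $\bar d^2 d(\log N')^4\alpha^{1/2}$ is not $o(1)$; indeed it can grow like a power of $D$. The calibration cannot be repaired by tuning $C_1$ (and note the constant $C_1$ must be taken \emph{large}, not small as you say at the end — making $C_1$ small only weakens (\ref{1}) and enlarges the class of inputs).

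The paper resolves this by introducing a third, intermediate region $\mathfrak{M}''$ consisting of arcs with $Q_1 < q \le Q'$ where $Q'\asymp(\bar d d)^4(\log N')^8/\alpha^2$. On $\mathfrak{M}''$ one does \emph{not} use Vinogradov; one uses the major-arc estimate of Proposition~\ref{major arcs prop} (this is legitimate since $dQ'\le D^9$ in the exceptional case by (\ref{1}); this is precisely why the exponent $10$ appears in Lemma~\ref{dichotomy lemma}), giving $|\widehat{F_{N',\bar d d}}(\theta)|\ll|\widehat{F_{N',\bar d d}}(0)|/\phi(q)\ll\alpha|\widehat{F_{N',\bar d d}}(0)|$ there. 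Only for $q>Q'$ does the minor-arc estimate kick in, and at the new threshold $N'/\sqrt{Q'}\asymp\alpha N'/((\bar d d)^2(\log N')^4)$ the factor $\bar d d(\log N')^4$ from Proposition~\ref{minor arc estimate} cancels exactly, leaving $\ll\alpha N'/(\bar d d)\ll\alpha|\widehat{F_{N',\bar d d}}(0)|$. Without the intermediate region your minor-arc bound is off by an uncontrolled power of $\bar d d\log N$, so the proof as written does not close.

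Two smaller points. First, the Cauchy--Schwarz step $|T_1|^2\le 4T_0\int|\widehat f|^2|\widehat{F_{N',\bar d d}}|$ needs justification: Cauchy--Schwarz with weight $|\widehat{F_{N',\bar d d}}|$ gives $|T_1|\le 2\alpha\bigl(\int|\widehat{1_{[N]}}|^2|\widehat{F_{N',\bar d d}}|\bigr)^{1/2}\bigl(\int|\widehat f|^2|\widehat{F_{N',\bar d d}}|\bigr)^{1/2}$, and $\int|\widehat{1_{[N]}}|^2|\widehat{F_{N',\bar d d}}|$ is not the same as $T_0/\alpha^2=\int|\widehat{1_{[N]}}|^2\widehat{F_{N',\bar d d}}$ since $\widehat{F_{N',\bar d d}}$ is not nonnegative; you would have to estimate the former separately (again requiring the three-region split). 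Second, the paper avoids the cross-term $T_1$ altogether by working with $\langle(1_A-\alpha 1_I)\ast(1_{-A}-\alpha 1_{-I}),F_{N',\bar d d}\rangle$ and showing directly, via the explicit physical-space computation of $\langle 1_A\ast 1_{-I},F_{N',\bar d d}\rangle$ and $\langle 1_I\ast 1_{-I},F_{N',\bar d d}\rangle$, that if $\tilde S$ is small then this inner product (which is exactly your $T_2$) is large in absolute value; this is cleaner than controlling $T_1$ and $T_2$ separately.
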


\begin{proof}

Let $c>0$ be an absolute constant to be optimised later. Then, either $N \ll c^{-1}\alpha^{-1}$ which is impossible for $C_1 \gg_c 1$ due to (\ref{1}), or $N' := \lfloor c\alpha N \rfloor \geq 1$. By Proposition~\ref{FT at zero}, we have 
\begin{equation}\label{lower bound at 0}
|\widehat{F_{N',\bar{d}d}} (0) | \gg \dfrac{N'}{\bar{d}}.
\end{equation}

Suppose first that
\begin{equation}\label{counting weighted primes iteration}
\langle 1_A * 1_{-A}, F_{N' ,\bar{d}d} \rangle \geq \dfrac{ \alpha^2 N |\widehat{F_{N', \bar{d}d}}(0)|}{2}.
\end{equation}
Notice that there are at most $O((\bar{d}dN')^{1/2})$ composite prime powers which are less than $\bar{d}dN' +1$. Combine this observation with the trivial upper bounds $1_{A} * 1_{-A} (n) \leq \alpha N$ and $F_{N' ,\bar{d}d} (n) \ll \log(\bar{d}dN')$, we have
$$\sum_{\substack{n: \bar{d}dn+1 = p^k \\ p \text{ prime}, k \geq 2}} 1_A * 1_{-A} (n)  F_{N' ,\bar{d}d} (n) \ll \alpha N \sqrt{\bar{d}dN'} \log (\bar{d}dN').$$
Since $C_1$ is large, it follows from (\ref{1}) that $|\widehat{F_{N', \bar{d}d}}(0)|^{2/3}  \gg \alpha \sqrt{\bar{d}dN'} \log (\bar{d}dN')$, and so
$$\sum_{\substack{n: dn+1 = p^k \\ p \text{ prime}, k \geq 2}} 1_A * 1_{-A} (n)  F_{N' ,\bar{d}d} (n) \ll \alpha^2 N |\widehat{F_{N', \bar{d}d}}(0)|^{2/3}.$$
By (\ref{lower bound at 0}) and the definition of $N'$, if $|\widehat{F_{N', d}}(0)| = O(1)$ then (\ref{1}) can no longer hold. Thus, subtracting the inequality above from (\ref{counting weighted primes iteration}) yields
$$\sum_{\substack{n: \bar{d}dn+1 \in \mb{P}}} 1_A * 1_{-A} (n)  F_{N' ,\bar{d}d} (n) \geq \dfrac{\alpha^2 N |\widehat{F_{N', \bar{d}d}}(0)|}{4}.$$
Using the bounds $1_{A} * 1_{-A} (n) \leq \alpha N$ and $F_{N' ,\bar{d}d} (n) \leq \log(\bar{d}dN'+1)$ again, together with H\"older's inequality, (\ref{1}) and (\ref{lower bound at 0}), we can conclude that
\begin{equation}\label{outcome 2}
\left| (A - A) \cap \{n: \bar{d} d n + 1 \in \mb{P} \} \right| \gg \dfrac{  \alpha N' }{ \bar{d} \log N'}.
\end{equation}

Otherwise, we have 
\begin{equation}\label{counting weighted primes iteration 2}
\langle 1_A * 1_{-A}, F_{N' ,\bar{d}d} \rangle \leq \dfrac{ \alpha^2 N |\widehat{F_{N', \bar{d}d}}(0)|}{2}.
\end{equation}
Let $I$ denote the interval $[N]$ and consider the inner product
$$\langle ({1}_A - \alpha {1}_I) * ({1}_{-A} - \alpha {1}_{-I} ), F_{N',\bar{d}d} \rangle.$$
By (\ref{counting weighted primes iteration 2}) and direct computations, we have
$$\langle ({1}_A - \alpha {1}_I) * ({1}_{-A} - \alpha {1}_{-I} ), F_{N',\bar{d}d} \rangle \leq \alpha^2 N |\widehat{F_{N', \bar{d}d}}(0)| (-1/2 + O(c)).$$
Therefore, we can choose $c \gg 1$ which guarantees
$$|\langle ({1}_A - \alpha {1}_I) * ({1}_{-A} - \alpha {1}_{-I} ), F_{N',\bar{d}d} \rangle | \gg \alpha^2 N |\widehat{F_{N', \bar{d}d}}(0)|.$$
It follows from Plancherel's theorem that
\begin{equation}\label{energy}
\int_{\mb{T}} \left| \left(\widehat{1_A} - \alpha \widehat{1_I} \right) (\theta) \right|^2 |\widehat{F_{N', \bar{d}d}} (\theta) | d\theta \gg \alpha^2 N |\widehat{F_{N', \bar{d}d}}(0)|.
\end{equation}

Let $c', c''$ be two positive absolute constants to be chosen later. Let 
$$Q' = \dfrac{(\bar{d}d)^4 \log^8 N'}{c'^2 \alpha^2}, \; Q = \dfrac{N'}{Q'} \; \text{and}\; Q'' = c'' \alpha^{-3}.$$
If $Q' \geq Q''$ then let
$$\mfr{M}' := \bigcup_{Q' < q \leq Q} \mfr{M}^*_q, \; \mfr{M}'' := \bigcup_{Q'' < q \leq Q'} \mfr{M}^*_q  \text{  and  } \mfr{M}''' := \bigcup_{q \leq Q''} \mfr{M}^*_q,$$
which are defined at the beginning of this section. If $Q' < Q''$ we define $\mfr{M}'' = \emptyset$ and $\mfr{M}', \mfr{M}'''$ as above. Notice that it follows from Dirichlet's pigeonhole principle that $\mb{T} = \mfr{M}' \cup \mfr{M}'' \cup \mfr{M}'''$,
and so
\begin{multline}\label{decomposition of T v2}
\int_{\theta \in \mb{T}} | (\widehat{1_A} - \alpha \widehat{1_I} ) (\theta) |^2 |\widehat{F_{N', \bar{d}d}} (\theta)| d\theta
\leq \int_{\theta \in \mfr{M}'} | (\widehat{1_A} - \alpha \widehat{1_I} ) (\theta) |^2 |\widehat{F_{N', \bar{d}d}} (\theta)| d\theta 
\\ + \int_{\theta \in \mfr{M}''} | (\widehat{1_A} - \alpha \widehat{1_I} ) (\theta) |^2 |\widehat{F_{N', \bar{d}d}} (\theta)| d\theta +\int_{\theta \in \mfr{M}'''} | (\widehat{1_A} - \alpha \widehat{1_I} ) (\theta) |^2 |\widehat{F_{N', \bar{d}d}} (\theta)| d\theta.
\end{multline}

By (\ref{1}), we can apply the minor arc estimate given in Theorem~\ref{minor arc estimate} with our choices\footnote{We also used $N \leq \exp(D^{1/10})$ in the inequality below.} of $N', Q'$ and $Q$ to see that
\begin{equation}\label{upper bound - minor arcs}
|\widehat{F_{N', \bar{d}d}} (\theta) | \ll \dfrac{N'}{\bar{d}d} \left( c' \alpha + \bar{d}d N'^{-1/10} \right) \ll \dfrac{c' \alpha N'}{\bar{d}d} \ll c' \alpha |\widehat{F_{N',\bar{d}d}} (0)|
\end{equation}
for all $\theta \in \mfr{M}'$. 
By Plancherel's theorem,
\begin{equation*}
\int_{\mb{T}} \left|  (\widehat{1_A} - \alpha \widehat{1_I} ) (\theta) \right|^2 d\theta \leq \alpha N.
\end{equation*}
Hence, it follows from H\"older's inequality and (\ref{upper bound - minor arcs}) that
\begin{equation}\label{energy - large range}
\int_{\theta \in \mfr{M}'} \left|  (\widehat{1_A} - \alpha \widehat{1_I} ) (\theta) \right|^2 |\widehat{F_{N', \bar{d}d}} (\theta) | d\theta \ll c' \alpha^2 N  |\widehat{F_{N',\bar{d}d}}(0)|.
\end{equation}

We employ our major arc estimate on $\mfr{M}''$ and $\mfr{M}'''$. Since for all $C_1 \gg_{c'} 1$, we have $dQ' \leq D^9$ if $D$ is exceptional or $dQ' \leq D$ if $D$ is unexceptional, it follows from Proposition~\ref{major arcs prop} that for $a/q + \delta \in \mfr{M}_{a,q}$ we have
\begin{multline}\label{major arc - 2}
 \left| \widehat{F_{N',\bar{d}d}} \left( \dfrac{a}{q} + \delta \right) \right|  \ll  \dfrac{ |\widehat{F_{N',\bar{d}d}} (0)| }{\phi(q)} \\
 + O \left( (1 + |\delta| N') \bar{d}dq N' \exp\left( - \Omega \left( \dfrac{ \log (\bar{d}dqN')}{\sqrt{\log N'} + \log D} \right) \right) \log^4 (\bar{d}dq) \right).
\end{multline}
Notice that for any $C_1$ which is sufficiently large in terms of $c'$ and the implicit constants in (\ref{lower bound at 0}) and (\ref{major arc - 2}), it follows from our choices of $Q, Q'$ and (\ref{1}) that for each $q \leq Q'$, the second term in the major arc estimate (\ref{major arc - 2}) is at most $|\widehat{F_{N', \bar{d}d}}(0)|/\phi(q)$. Therefore, uniformly for all $q \leq Q'$ and $a/q + \delta \in \mfr{M}_{a,q}$ we have
\begin{equation}\label{major arc range}
\left| \widehat{F_{N',\bar{d}d}} \left( \dfrac{a}{q} + \delta \right) \right| \ll \dfrac{|\widehat{F_{N', \bar{d}d}}(0)|}{\phi(q)}.
\end{equation}
It follows from the choice of $Q''$ and the definition of $\mfr{M}''$ that
\begin{equation}\label{sub-critical range}
\sup_{\theta \in \mfr{M}''} |\widehat{F_{N', \bar{d}d}} (\theta) |  \ll c'' \alpha{|\widehat{F_{N', \bar{d}d}} (0)|}.
\end{equation}
By (\ref{sub-critical range}) and Plancherel's theorem, we have
\begin{equation}\label{energy - sub-critical range}
\int_{\theta \in \mfr{M}''} |  (\widehat{1_A} - \alpha \widehat{1_I} ) (\theta) |^2 |\widehat{F_{N', \bar{d}d}} (\theta)| d\theta \ll c'' \alpha^2 N |\widehat{F_{N', \bar{d}d}} (0) |.
\end{equation}

Therefore, combining (\ref{energy}), (\ref{energy - large range}), (\ref{energy - sub-critical range}) and (\ref{decomposition of T v2}), we can choose $c' \gg 1$ and $c'' \gg 1$ so that
$$\int_{\theta \in \mfr{M}'''} |  (\widehat{1_A} - \alpha \widehat{1_I} ) (\theta) |^2 |\widehat{F_{N', \bar{d}d}} (\theta)| d\theta \gg \alpha^2 N |\widehat{F_{N', \bar{d}d}} (0) |.$$
By the definition of $\mfr{M}'''$ and the triangle inequality, we have
$$\sum_{q \leq Q''} \int_{\theta \in \mfr{M}^*_q} |  (\widehat{1_A} - \alpha \widehat{1_I} ) (\theta) |^2 |\widehat{F_{N', \bar{d}d}} (\theta)| d\theta \gg  \alpha^2 N |\widehat{F_{N', \bar{d}d}} (0) |.$$
Combine this with (\ref{major arc range}), we can deduce that
$$\sum_{q \leq Q''} \dfrac{|\widehat{F_{N', \bar{d}d}} (0) | }{\phi(q)} \int_{\theta \in \mfr{M}^*_q} |  (\widehat{1_A} - \alpha \widehat{1_I} ) (\theta) |^2 d\theta \gg \alpha^2 N |\widehat{F_{N', \bar{d}d}} (0) |.$$
Since $|\widehat{F_{N', \bar{d}d}} (0) |>0$, we obtain the density increment outcome (i) by applying Proposition~\ref{Cor 7.3 RS}.

\end{proof}

Now we apply iteratively the lemma above to find shifted primes in the difference set. Assuming the input parameters satisfy certain conditions, which roughly say that $A$ has large density and the input common difference is not too large, we rule out density increment and locate many shifted primes in an arithmetic progression.

\begin{lemma}\label{inner iteration PR}
Let $N$ and $d$ be positive integers. Let $D \geq 2$ and $\alpha>0$. Suppose that $N < \exp (D^{1/10})$ and $\alpha = O ( (\log N)^{-1})$. Suppose also that
\begin{equation}\label{PR inner iteration - size assumption}
C_{2}  (\log D)^2 \leq \log N \text{ and } \log d + (\log (\alpha^{-1}))^2 \leq C_2^{-1} \log D
\end{equation}
for some sufficiently large constant $C_2$. Let $\bar{d} = d_D$ be the modulus of the exceptional zero if $D$ is exceptional, or $\bar{d} = 1$ if $D$ is unexceptional.

Suppose that $A$ is contained in an arithmetic progression of length $N$ with common difference $\bar{d}d$, and has $|A| = \alpha N$. Then, there exist $A' \subseteq A - A$ which is contained in an arithmetic progression of length $N' \geq \alpha^{C_3 (\log \alpha^{-1})^2}  (\bar{d}d)^{ - C_3 \log\alpha^{-1}} (\log N)^{- C_3 \log\alpha^{-1}} N$ with common difference $\bar{d} d'$, where $d' \leq \alpha^{-C_3 \log\alpha^{-1}} d$ and $d \mid d'$, such that
$$|A' \cap (\mb{P} - 1)|  \geq  \dfrac{c_1 \alpha N'} {\bar{d} \log N'}.$$
Here, $C_3$ is an absolute constant which does not depend on $N, \alpha, d, \bar{d}$ or $D$, and $c_1$ is the same one as in Lemma~\ref{iteration}. 
\end{lemma}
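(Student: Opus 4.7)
The plan is to apply Lemma~\ref{iteration} iteratively to build up shifted primes in the difference set. First, I would identify the ambient AP containing $A$ with $[N]$ via the obvious affine bijection, so that $A$ becomes a subset of $[N]$ of density $\alpha$ and elements of $(A-A) \cap (\mb{P}-1)$ correspond to $n \in A-A$ with $\bar{d}dn + 1 \in \mb{P}$. Then I would construct a sequence $(A_i, \alpha_i, d_i, N_i)_{i \geq 0}$, starting with $(A, \alpha, d, N)$, by the following rule: at stage $i$, apply Lemma~\ref{iteration} to $A_i \subseteq [N_i]$ with parameter $d_i$; if outcome (ii) triggers, stop with $k_* = i$; otherwise outcome (i) gives an AP $P_{i+1} \subseteq [N_i]$ of common difference $q_{i+1} \ll \alpha_i^{-3}$ and length $\gg (\alpha_i/(\bar{d}d_i \log N_i))^8 N_i$ on which $A_i$ has density at least $\alpha_i(1+c_1)$. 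Rescale $P_{i+1}$ to an initial segment $[N_{i+1}]$, set $A_{i+1}$ to be the rescaled copy of $A_i \cap P_{i+1}$, and update $\alpha_{i+1} \geq \alpha_i(1+c_1)$ and $d_{i+1} = d_i q_{i+1}$. The AP in the original coordinates has common difference $\bar{d}d_{i+1}$, with $d_i \mid d_{i+1}$.

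Since $\alpha_i \leq 1$ and each step multiplies the density by at least $1 + c_1$, the iteration must terminate at some $k_* = O(\log \alpha^{-1})$. Using $\alpha_i \geq \alpha$ throughout, the crude bound $d_{k_*} \leq d \cdot \prod_{j \leq k_*} C \alpha_{j-1}^{-3} \leq d \cdot \alpha^{-O(\log \alpha^{-1})}$ matches the claim on $d'$. Telescoping the ratios $N_{i+1}/N_i \gg (\alpha_i/(\bar{d}d_i \log N_i))^8$ with the uniform bounds $\alpha_i \geq \alpha$, $d_i \leq d_{k_*}$, and $\log N_i \leq \log N$ yields the claimed $N_{k_*} \geq \alpha^{O((\log \alpha^{-1})^2)} (\bar{d}d)^{-O(\log \alpha^{-1})} (\log N)^{-O(\log \alpha^{-1})} N$. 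At termination, outcome (ii) applied to $A_{k_*}$ produces $N'' \gg \alpha_{k_*} N_{k_*} \gg \alpha N_{k_*}$ together with at least $c_1 \alpha_{k_*} N''/(\bar{d}\log N'')$ integers $n \in A_{k_*} - A_{k_*}$ with $\bar{d}d_{k_*} n + 1 \in \mb{P}$. Pulling back through the affine identifications yields the desired $A' \subseteq A-A$, contained in an AP of common difference $\bar{d}d_{k_*}$ and length $N' \asymp N''$, with $|A' \cap (\mb{P}-1)| \geq c_1 \alpha N'/(\bar{d} \log N')$.

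The main obstacle is the bookkeeping required to keep hypothesis (\ref{1}) of Lemma~\ref{iteration} valid at every stage. For the bound $\log d_i + \log \alpha_i^{-1} \leq C_1^{-1} \log D$, the inequalities $\log d_i \leq \log d + O((\log \alpha^{-1})^2)$ and $\log \alpha_i^{-1} \leq \log \alpha^{-1}$, combined with $\log d + (\log \alpha^{-1})^2 \leq C_2^{-1} \log D$ from (\ref{PR inner iteration - size assumption}), suffice once $C_2$ is a sufficiently large multiple of $C_1$. For the bound $C_1 (\log D)^2 \leq \log N_i$, the total loss $\log N - \log N_{k_*}$ is at most $O((\log \alpha^{-1})^3) + O((\log \alpha^{-1})(\log \bar{d}d + \log \log N))$, and each of these pieces is $o(\log N)$ under (\ref{PR inner iteration - size assumption}), given $\bar{d} \leq D$ and $N \leq \exp(D^{1/10})$. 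With these two hypotheses verified throughout, the constant $C_3$ of the conclusion is extracted directly from the arithmetic of the iteration above.
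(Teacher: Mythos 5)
Your proposal is correct and follows essentially the same route as the paper's own proof: iterate Lemma~\ref{iteration}, tracking $(\alpha_i,d_i,N_i)$, terminating in outcome~(ii) within $O(\log\alpha^{-1})$ steps, telescoping the bounds on $d$ and $N$, and checking hypothesis~(\ref{1}) is preserved throughout. The bookkeeping you describe (the growth of $d_i$, the decay of $N_i$, and the verification that $\log d_i + \log\alpha_i^{-1}$ and $C_1(\log D)^2$ stay within range) matches the paper's inductive bounds.
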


\begin{proof}
Let $A_0$ be the affine transformation of $A$ so that $A_0 \subseteq [N]$. We choose 
$$A_1 = A_0, N_1 = N, \alpha_1 = \alpha \text{ and } d_1 = d.$$ 
Since $C_2$ is large, $D, \bar{d}, A_1, N_1, \alpha_1, d_1$ satisfy the hypotheses of Lemma~\ref{iteration}.

Suppose for some positive integer $k$, we have obtained $A_k$, $N_k$, $\alpha_k$ and $d_k$ such that $A_k$ is a subset of $[N_k]$, $|A_k| = \alpha_k N_k$, $d \mid d_k$, $A_k$ is dilated by a factor $\bar{d}^{-1} d_k^{-1}$ times a shifted subset of $A$, the inputs $D, \bar{d}, A_k, N_k, \alpha_k, d_k$ satisfy the hypotheses of Lemma~\ref{iteration}, and $N_k$, $\alpha_k$, $d_k$ satisfy the bounds
\begin{equation}\label{density bound PR chap}
\alpha_{k} \geq (1 + c_1 )^{k-1} \alpha,
\end{equation}
\begin{equation}\label{size d PR chap}
d_{k} \leq \alpha^{-C(k-1)} d,
\end{equation}
and
\begin{equation}\label{size N PR chap}
N_{k} \geq \dfrac{ \alpha^{Ck^2} N} { (\bar{d} d)^{C(k-1)} (\log N)^{C(k-1)}},
\end{equation}
where $C$ is a large absolute constant which is independent of any parameters. Let us apply Lemma~\ref{iteration} to $D$, $\bar{d}$, $A_k$, $N_k$, $\alpha_k$ and $d_k$. Either we are in outcome (ii), or it follows from outcome (i) that there exist $A_{k+1}$, $N_{k+1}$, $\alpha_{k+1}$ and $d_{k+1}$ such that $A_{k+1}$ is a subset of $[N_{k+1}]$, $|A_{k+1}| = \alpha_{k+1} N_{k+1}$, $d \mid d_{k+1}$, $A_{k+1}$ is dilated by a factor $\bar{d}^{-1} d_{k+1}^{-1}$ times a shifted subset of $A$, the inputs $N_{k+1}$, $\alpha_{k+1}$, $d_{k+1}$ satisfy the bounds $\alpha_{k+1} \geq (1 + c_1 )^{k} \alpha,$ $d_{k+1} \leq \alpha^{-Ck} d,$ and $N_{k+1} \geq \alpha^{C(k+1)^2} N { (\bar{d} d)^{-Ck} (\log N)^{-Ck}}$. Moreover, for sufficiently large $C_2$, condition (\ref{1}) from Lemma~\ref{iteration} holds for all $k \leq 2(\log \alpha^{-1}) / \log (1+ c_1)$, where $c_1$ is the constant from Lemma~\ref{iteration}. 

Since outcome (i) at every step of the iteration would produce a set with density bigger than $1$ when $k > (\log  \alpha^{-1} )/ \log (1+ c_1)$, it follows that we must be in outcome (ii) for some 
\begin{equation}\label{size steps PR chap}
k_0 \leq\dfrac{\log \alpha^{-1}}{\log (1+ c_1)}.
\end{equation}
Using the size bounds (\ref{density bound PR chap}), (\ref{size d PR chap}) and (\ref{size N PR chap}) and the bound given in outcome (ii), we have
\begin{equation}\label{shifted primes outcome iteration}
\left| (A_{k_0} - A_{k_0}) \cap \left( \dfrac{\mb{P} - 1}{\bar{d}d_{k_0}} \right) \right| \geq \dfrac{ c_1 \alpha_{k_0}  N'_{k_0}}{ \bar{d} \log N'_{k_0}},
\end{equation}
where
\begin{equation}\label{size N' PR chap}
N_{k_0}' \gg \alpha_{k_0} N_{k_0} \gg \dfrac{ \alpha^{Ck_0^2+2} N} { (\bar{d}d)^{Ck_0} (\log N)^{Ck_0}}.
\end{equation}
Let 
$$
A'_{k_0} =  \{ n \in {A_{k_0}} - {A_{k_0}}: n \leq N'_{k_0}, \bar{d} d_{k_0} n + 1 \text{ is a prime} \}.$$
It follows from (\ref{shifted primes outcome iteration}) that
\begin{equation}\label{size A' PR chap}
|A'_{k_0}|  \geq \dfrac{ c_1 \alpha_{k_0}  N'_{k_0}}{ \bar{d} \log N'_{k_0}}.
\end{equation}
Let $A'$ be the pre-image of $A'_{k_0}$ under the affine transformations carried out in the proof above. Since $A_{k_0}$ is dilated by a factor $\bar{d} d_{k_0}^{-1}$ times a shifted subset of $A$, it follows that $A'$ is contained in an arithmetic progression of length $N_{k_0}'$ with common difference $\bar{d} d_{k_0}$, and $A'$ consists of numbers which are one less than a prime. Therefore, one can conclude that the assertion of the lemma holds for sufficiently large $C_3$ by (\ref{size d PR chap}), (\ref{size steps PR chap}), (\ref{size N' PR chap}) and (\ref{size A' PR chap}).
\end{proof}

\section{The bootstrapping lemma}
The aim of this section is to establish the bootstrapping lemma which is the key tool used in the proof of Theorem~\ref{improved PR bound}.

We need the following preliminary lemma which follows from averaging.

\begin{lemma}\label{PR averaging}
Let $X \subseteq \mb{Z}$ be a set contained in an arithmetic progression of length $N$ with common difference $d$. Suppose $Y \subseteq \mb{Z}$ is contained in an arithmetic progression of length $N'$ with common difference $dd'$ where $d'$ is a positive integer, then there exists $n \in \mb{Z}$ such that
$$| (n +Y) \cap X | \geq \dfrac{|X||Y|}{N + d' N'}.$$
\end{lemma}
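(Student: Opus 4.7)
The plan is a standard double-counting / pigeonhole argument. I begin from the identity
\[
\sum_{n \in \mathbb{Z}} |(n + Y) \cap X| \;=\; |X|\,|Y|,
\]
obtained by writing each side as $\sum_{x \in X,\, y \in Y} \mathbf{1}[n = x - y]$ and interchanging the order of summation. The left-hand side is supported on those shifts $n$ for which the intersection is nonempty, which is exactly the difference set $X - Y$, so it suffices to control $|X - Y|$ and then apply pigeonhole.

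To bound $|X - Y|$ I would use the arithmetic structure of the two containing progressions. Writing $X \subseteq a + d\{0, 1, \ldots, N-1\}$ and $Y \subseteq b + dd'\{0, 1, \ldots, N'-1\}$, every element of $X - Y$ has the form $(a - b) + d(j - kd')$ with $0 \leq j \leq N-1$ and $0 \leq k \leq N'-1$. The integer $j - kd'$ lies in the interval $[-(N'-1)d',\, N-1]$, so it takes at most $(N-1) - (-(N'-1)d') + 1 = N + (N'-1)d' \leq N + d'N'$ distinct values. Consequently $|X - Y| \leq N + d'N'$.

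Combining the identity with this support bound, some term in the sum is at least the average, yielding a shift $n$ with $|(n + Y) \cap X| \geq |X|\,|Y| / (N + d'N')$, as desired. There is no real obstacle; the only point worth flagging is that the divisibility hypothesis (the common difference $dd'$ of the AP containing $Y$ being a multiple of the common difference $d$ of the AP containing $X$) is essential, since it forces $X - Y$ into a single residue class modulo $d$ and thereby gives a count linear in $N + d'N'$ rather than the cruder quadratic bound $NN'$.
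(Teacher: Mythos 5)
Your proof is correct and takes essentially the same double-counting/pigeonhole route as the paper. The only cosmetic difference is that the paper first reduces to $d=1$ by an affine change of variables before bounding the number of shifts $n$ for which $(n+Y)\cap X\neq\emptyset$, whereas you bound $|X-Y|$ directly for general $d$; the substance is identical.
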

\begin{proof}
We assume $d = 1$ in the proof below, since the general result follows from applying the $d=1$ result to affine transformations of the sets $X$ and $Y$.

Notice that the set
$$\{ x \in \mb{Z} : (x + Y) \cap X \neq \emptyset \}$$
is contained in an interval of length at most $N + d'N'$. Also, by a direct counting argument, we have
$$\sum_{x \in \mb{Z}} | (x + Y) \cap X| = |X| |Y|.$$
Therefore, it follows from the pigeonhole principle that there exists some $n \in \mb{Z}$ such that
\begin{align*}
| (n + Y) \cap X| \geq \dfrac{|X||Y|}{N +d'N'},
\end{align*}
which completes the proof.
\end{proof}

We combine Lemma~\ref{inner iteration PR} and the averaging lemma above to deduce the bootstrapping lemma. There are two steps involved in the proof of the bootstrapping lemma below. The first one is to apply the iteration lemma to an initial set and the pigeonhole principle to find either a desired monochromatic solution, or a large monochromatic subset of shifted primes in its difference set. In the latter case, we then use the averaging lemma above to locate one translate whose intersection with the initial set is large. Since difference sets are translation invariant, the difference set of this particular translate lies in the difference set of the initial set, and so it does not contain the colour of the initial set.

\begin{lemma}\label{PR iteration}
Let $N_0$ and $k$ be positive integers. Let $N \leq N_0$ and $d$ be positive integers. Let $D \geq 2$ be a positive number such that $N < \exp(D^{1/10})$ and let $\alpha = O( (\log N)^{-1} )$. Let $\bar{d} = d_D$ be the modulus of the exceptional zero if $D$ is exceptional, or $\bar{d} = 1$ if $D$ is unexceptional.
 
Suppose $N,d,\alpha, D$ satisfy (\ref{PR inner iteration - size assumption}), then for any $k$-colouring of $(\mb{P} -1) \cap [N_0]$ and $j \leq k$, we have the following.

Suppose $A \subseteq [N_0]$ is a monochromatic subset of $\mb{P}-1$ contained in an arithmetic progression of length $N$ with common difference $d\bar{d}$, and has $|A| = \alpha N$. Suppose also that $A \cup ((A-A) \cap (\mb{P}-1)) $ is $j$-coloured, then 
\begin{enumerate}
\item either there exists a monochromatic solution to $x - y = z$ where $x, y, z \in (\mb{P}-1) \cap [N_0]$;
\item or there exists a monochromatic set $A'' \subseteq \mb{P}-1$ satisfying the following:
 \begin{enumerate}
 \item $A''$ is contained in an arithmetic progression of length $N''$ and common difference $d''\bar{d}$, where  $N'' \geq \alpha^{C (\log \alpha^{-1})^2} (dD)^{ - C \log \alpha^{-1}} (\log N)^{ - C \log \alpha^{-1}} N$, $d \mid d''$ and $d'' \leq \alpha^{-C\log\alpha^{-1}} d$;
 \item $|A''| \geq {\alpha^{C}} ({(j-1) \bar{d} \log N})^{-1} N''$; 
 \item $A'' \cup ((A'' - A'') \cap (\mb{P}-1))$ is $(j-1)$-coloured.
 \end{enumerate}
\end{enumerate}
\end{lemma}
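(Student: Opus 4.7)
The plan is to combine Lemma~\ref{inner iteration PR} (which populates $A-A$ with many shifted primes) with a pigeonhole step to extract a large monochromatic sub-family of those primes, and then to apply the averaging Lemma~\ref{PR averaging} to produce a translate of that sub-family concentrating inside $A$. Since the hypotheses (\ref{PR inner iteration - size assumption}) match those of Lemma~\ref{inner iteration PR}, I first apply that lemma to $A$ to obtain $A'\subseteq A-A$ contained in an arithmetic progression of length $N'$ and common difference $\bar{d}d'$, with $d\mid d'$, $d'\leq \alpha^{-C_3\log\alpha^{-1}}d$, $N' \geq \alpha^{C_3(\log\alpha^{-1})^2}(\bar{d}d)^{-C_3\log\alpha^{-1}}(\log N)^{-C_3\log\alpha^{-1}}N$, and $|A'\cap(\mb{P}-1)|\geq c_1\alpha N'/(\bar{d}\log N')$.

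Let $c$ denote the colour of $A$, and split into cases according to whether $A'\cap(\mb{P}-1)$ contains a point of colour $c$. If it does, say $z\in A'\cap(\mb{P}-1)$ has colour $c$, then since $z\in A-A$ we can write $z=x-y$ with $x,y\in A$, and $x,y,z$ all lie in $(\mb{P}-1)\cap[N_0]$ with the common colour $c$, yielding outcome (1). Otherwise, $A'\cap(\mb{P}-1)$ is coloured using only the (at most $j-1$) colours appearing in the $j$-palette of $A\cup((A-A)\cap(\mb{P}-1))$ other than $c$, so pigeonhole yields a monochromatic $B\subseteq A'\cap(\mb{P}-1)$ of some colour $c'\neq c$ with $|B|\geq c_1\alpha N'/((j-1)\bar{d}\log N')$.

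Next, I invoke Lemma~\ref{PR averaging} with $X=A$ (in an AP of length $N$ and common difference $\bar{d}d$) and $Y=B$ (in an AP of length $N'$ and common difference $\bar{d}d'$, which is a multiple of $\bar{d}d$ since $d\mid d'$) to produce $n\in\mb{Z}$ with $|(n+B)\cap A|\geq |A||B|/(N+(d'/d)N')$. A key numerical observation keeps this efficient: because $A'\subseteq A-A$ sits inside an AP of length at most $2N-1$ with common difference $\bar{d}d$, intersecting with the coarser AP of common difference $\bar{d}d'$ forces $(d'/d)N'\leq 2N$, so the denominator is at most $3N$. Setting $A'' := -n + ((n+B)\cap A)$, we see $A''\subseteq B\subseteq\mb{P}-1$ is monochromatic of colour $c'$, sits in an AP of length $N'':=N'$ with common difference $\bar{d}d'':=\bar{d}d'$, and has $|A''|\gg \alpha^2 N''/((j-1)\bar{d}\log N)$.

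To close, note $A''+n\subseteq A$, whence $A''-A''\subseteq A-A$, so $(A''-A'')\cap(\mb{P}-1)\subseteq (A-A)\cap(\mb{P}-1)$ inherits its colours; the case assumption rules out colour $c$ there, and $c'\neq c$ as well, so $A''\cup((A''-A'')\cap(\mb{P}-1))$ is $(j-1)$-coloured, giving (2)(c). The bounds (2)(a)--(b) then follow from the bounds on $N',d'$ from Lemma~\ref{inner iteration PR} together with $\bar{d}\leq D$, for any sufficiently large absolute constant $C$. The main obstacle is the bookkeeping of the many size parameters; the crucial observation that keeps $|A''|$ scaling as an absolute constant power of $\alpha$, rather than the catastrophic $\alpha^{-O(\log\alpha^{-1})}$ one would naively obtain from the averaging, is precisely the relation $d'N'=O(dN)$ forced by $A'\subseteq A-A$.
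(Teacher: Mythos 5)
Your argument follows the paper's proof closely and the main structure is correct: apply Lemma~\ref{inner iteration PR}, pigeonhole for a large monochromatic $B$ inside $A'\cap(\mb{P}-1)$, and invoke Lemma~\ref{PR averaging} together with the observation $d'N' \leq 2dN$ (forced by $A' \subseteq A-A$) to obtain a dense translate $A'' = B \cap (A-n)$ with $A''-A'' \subseteq A-A$.

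However, the case split is stated on the wrong set, and this creates a real gap in the verification of (2)(c). You branch on whether $A' \cap (\mb{P}-1)$ contains a point of the colour $c$ of $A$. In the second branch you later assert that the case assumption rules out colour $c$ from the relevant set, but what you actually need is that \emph{all} of $(A-A)\cap(\mb{P}-1)$ avoids colour $c$: indeed $(A''-A'')\cap(\mb{P}-1)$ is only known to lie inside $(A-A)\cap(\mb{P}-1)$, not inside $A'$. Your case assumption says nothing about the part of $(A-A)\cap(\mb{P}-1)$ lying outside $A'$; if that part contains a colour-$c$ point, then $A''\cup((A''-A'')\cap(\mb{P}-1))$ could still exhibit all $j$ colours, and (2)(c) fails. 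The paper instead makes the dichotomy at the very start on whether $(A-A)\cap(\mb{P}-1)$ contains a point of colour $c$ --- if it does, that point is $x-y$ with $x,y\in A$ and outcome (1) holds immediately; if not, every element of $(A-A)\cap(\mb{P}-1)$, and a fortiori of $(A''-A'')\cap(\mb{P}-1)$, has a colour other than $c$, which is exactly what (2)(c) requires. Replacing your split on $A'$ with this split on the full difference set repairs the argument and leaves the rest of your proof intact.
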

\begin{proof}

Suppose there exists $z \in (A-A) \cap (\mb{P} -1)$ which shares the same colour as $A$, then we are in the first outcome. Hence, from now on, we assume that $(A-A) \cap (\mb{P} -1)$ is at most $(j-1)$-coloured, and every element of $(A-A) \cap (\mb{P}-1)$ has a different colour from $A$. 

By applying Lemma~\ref{inner iteration PR} with $A, \alpha, d, D, N$, we can find a set $A' \subseteq A - A$ such that $A'$ is contained in an arithmetic progression of length 
$$N' \geq \dfrac{ \alpha^{C_3 (\log \alpha^{-1})^2} N}{(\bar{d}d)^{C_3 \log\alpha^{-1}} (\log N)^{C_3 \log\alpha^{-1}}}$$ 
with common difference $d'\bar{d}$ where
$$d' \leq  \alpha^{-C_3\log\alpha^{-1}} d,  \;\; d \mid d',$$ 
and
$$|A'  \cap (\mb{P} - 1)  | \geq \dfrac{c_1 \alpha}{\bar{d} \log N'} N'.$$
Here $c_1$ and $C_3$ are the same constants as the ones in Lemma~\ref{inner iteration PR}.

Let $B$ be the largest colour class in $A'\cap (\mb{P}-1)$. It follows that $B$ is contained in an arithmetic progression of length $N'$ with common difference $d'\bar{d}$, and
$$|B| \geq \dfrac{c_1 \alpha}{ ( j - 1) \bar{d} \log N'} N'.$$
By Lemma~\ref{PR averaging}, there exists some $n$ such that
\begin{align*}
|(n + B) \cap A| \geq & \dfrac{|A| |B|}{N + (d'/d)N'}.
\end{align*}
Since $A' \subseteq A-A$, we have $d'N' \leq 2dN$, and so
\begin{align*} 
|(n + B) \cap A|  \geq & \dfrac{|A||B|}{3N} \\
% \geq &\dfrac{ \alpha N c_1 \alpha }{3N ( j - 1) \bar{d} \log N} N' \\
\geq & \dfrac{ c_1 \alpha^2 }{3 (j-1) \bar{d} \log N } N'.
\end{align*}
Let 
$$A'' := B \cap (A - n).$$
Since $A'' \subseteq B$, it follows that $A''$ is monochromatic and satisfies condition (a), with $N'' = N'$ and $d'' = d'$. We have
$$|A''| = |(n + B) \cap A| \geq \dfrac{ c_1 \alpha^2 }{3 (j-1) \bar{d} \log N } N''$$
which completes the proof of assertion (b). By our assumption which rules out outcome (i), we know that $A'' \subseteq B$ is monochromatic and has a different colour from $A$. Since $A'' \subseteq A - n$, we have $A'' - A'' \subseteq A - A$, and so $(A'' - A'') \cap (\mb{P}-1)$ contains at most $j-1$ colour classes and they are different from the colour of $A$. By combining these observations, we can conclude that assertion (c) is true.

\end{proof}

\section{Proof of Theorem~\ref{improved PR bound}}
The following lemma is used to modify the initial set if an exceptional zero occurs.
\begin{lemma}\label{modification 2}
Let $A$ be a subset of an arithmetic progression of length $N$ with common difference $d$ and have $|A| = \alpha N$ for some $\alpha > 0$. For every positive integer $\bar{d}$, there exists an arithmetic progression $P$ with common difference $d\bar{d}$ and length at least $\alpha N/\bar{d}$ such that $|A \cap P| \geq \alpha|P|/2$.
\end{lemma}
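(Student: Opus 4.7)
The plan is to prove this by a direct pigeonhole argument, partitioning the ambient arithmetic progression into $\bar{d}$ sub-progressions of common difference $d\bar{d}$ and selecting the densest one. Write the ambient AP as $\{a_0 + jd : 0 \leq j \leq N-1\}$; for each residue $r \in \{0, 1, \ldots, \bar{d}-1\}$, let $P_r$ consist of those elements whose index $j$ is congruent to $r$ modulo $\bar{d}$. Then $P_r$ is an arithmetic progression with common difference $d\bar{d}$ and length between $\lfloor N/\bar{d} \rfloor$ and $\lceil N/\bar{d} \rceil$.

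Since the $P_r$ partition the ambient AP, we have $\sum_r |A \cap P_r| = |A| = \alpha N$, so by pigeonhole there exists $r$ with $|A \cap P_r| \geq \alpha N / \bar{d}$; take $P = P_r$. The length condition $|P| \geq \alpha N/\bar{d}$ then follows immediately from $|P| \geq \lfloor N/\bar{d} \rfloor$ in the main case $N \geq \bar{d}$. In the degenerate case $N < \bar{d}$ we have $\alpha N/\bar{d} < 1$, so any singleton $\{a\}$ with $a \in A$ serves as $P$: it is trivially an AP of common difference $d\bar{d}$ with $|A \cap P|/|P| = 1 \geq \alpha/2$.

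For the density claim in the main case, $|P| \leq N/\bar{d} + 1$ gives
$$\frac{|A \cap P|}{|P|} \geq \frac{\alpha N / \bar{d}}{N/\bar{d} + 1} = \alpha \cdot \frac{N}{N + \bar{d}} \geq \frac{\alpha}{2},$$
which is exactly where the factor of $2$ slack in the statement is used.

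I do not expect any real obstacle here: this is essentially a one-line averaging argument, and the only subtlety is the off-by-one arising from the floor/ceiling when $\bar{d} \nmid N$, which is absorbed by the factor of $2$ in the density bound together with the singleton fix for $N < \bar{d}$.
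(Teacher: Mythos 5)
Your proof is correct, but it takes a slightly different route from the paper. The paper derives Lemma~\ref{modification 2} as a one-line corollary of Lemma~\ref{PR averaging}, taking $X = A$ and $Y = \{d\bar{d}\,n : 0 \leq n \leq \alpha N/\bar{d}\}$, so that $Y$ is a window of length about $\alpha N/\bar{d}$ which is slid over $\mathbb{Z}$ until a translate $P = n + Y$ meeting $A$ densely is found. You instead partition the ambient progression into the $\bar{d}$ residue classes of common difference $d\bar{d}$ and select the one carrying the most of $A$. Both are averaging arguments, but yours is self-contained (it does not invoke Lemma~\ref{PR averaging}) and in fact proves a stronger statement: the progression $P$ you produce has length roughly $N/\bar{d}$ rather than merely $\alpha N/\bar{d}$, which is what the paper's choice of $Y$ gives. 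The trade-off is that the paper's version is a literal one-liner once Lemma~\ref{PR averaging} is available.

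One minor inaccuracy in your write-up: the length condition $|P| \geq \alpha N/\bar{d}$ does not quite follow "immediately" from $|P| \geq \lfloor N/\bar{d}\rfloor$ in the range $N \geq \bar{d}$. For instance, if $\alpha$ is close to $1$ and $\bar{d} < N < 2\bar{d}$ one has $\lfloor N/\bar{d}\rfloor = 1 < \alpha N/\bar{d}$. This is not a real gap, because the length condition is automatic from the pigeonhole step: $|P| \geq |A \cap P| \geq \alpha N/\bar{d}$. It would be cleaner to state it that way rather than via the floor of $N/\bar{d}$.
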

\begin{proof}
The lemma follows from applying Lemma~\ref{PR averaging} with $d' = \bar{d}$ and $Y = \{d d'  n: 0 \leq n \leq \alpha N/\bar{d}, n \in \mb{Z}\}$.
\end{proof}

The proof of the main theorem essentially follows from bootstrapping Lemma~\ref{PR iteration} from the previous section. The initial monochromatic set follows from the prime number theorem and a basic application of the pigeonhole principle. Afterwards, we shall repeatedly apply Lemma~\ref{PR iteration} until we find a monochromatic solution to $x + y = z$ in $\mb{P}-1$, or we shall run out of the colour classes which is impossible. In the proof below, we shall clarify our choices of sets at each step of the bootstrapping, and keep a detailed record of the quantities involved there.

\begin{proof}[Proof of Theorem~\ref{improved PR bound}]

Let $N_0 = N$. By the prime number theorem and the pigeonhole principle, the largest colour class in $(\mb{P}-1) \cap [N_0]$ must have cardinality at least $(1 + o(1)) N_0 (k \log N_0)^{-1}$. Thus, we can always find a monochromatic subset $A_0 \subseteq [N_0]$ such that $|A_0| = \alpha_0 N_0$ with $\alpha_0 = (2 k \log N_0)^{-1}$. 

We shall proceed by contradiction. Suppose the theorem is false, then we claim that for each $1 \leq i \leq k$, we can find $D_i$, $A_i$, $N_i$, $\alpha_i$, $\overline{d_i}$, $A''_i, N''_i, d''_i, \alpha''_i$ which satisfy the following conditions:
 \begin{enumerate}[(I)]
 \item $2^{-k-4+i} (\log (2\alpha_{i})^{-1})^3  \leq \log D_{i} \leq 2^{-k-4+i} (\log \alpha_{i}^{-1})^3$;
 \item $2^{-k-4+i} (\log (2\alpha_{i-1})^{-1} )^3 \leq \log \alpha_{i}^{-1} \leq 2^{-k-3+i} (\log \alpha_{i-1}^{-1})^3$;
 \item $\log d_i \leq 2^{-k-4+i} (\log \alpha_{i-1}^{-1})^3$;
 \item $\log N_i \geq \log N_{i-1} - C' (\log \alpha^{-1}_{i-1})^9$ where $C'$ is a large absolute constant;
 \item $A_{i}''$ is contained in an arithmetic progression of length $N_{i}''$ with common difference $d_{i}''\overline{d_{i}}$, where $N_{i}'' \geq \alpha_{i}^{C (\log \alpha_{i}^{-1})^2} (d_{i}D_{i})^{ - C \log \alpha_{i}^{-1}} (\log N_i)^{ - C \log \alpha_{i}^{-1}} N_{i}$, $d_{i} \mid d_{i}''$ and $d_{i}'' \leq \alpha_{i}^{-C\log\alpha_{i}^{-1}} d_{i}$;
 \item $|A_{i}''| = \alpha''_{i} N''_{i}$ where $\alpha''_i/2 \leq {\alpha_{i}^{C \log \alpha_{i}^{-1}}} ({(k-i) d_{i} D_{i} \log N_{i}})^{-1} \leq \alpha''_{i} $;
and
 \item $A_{i}'' \cup ((A_{i}'' - A_{i}'') \cap (\mb{P}-1))$ is $(k-i)$-coloured;
\end{enumerate}
In particular, since condition (VI) and condition (VII) are impossible for $i=k$, we come to a contradiction at the $k$th step and so the theorem must be true.

Let $D_1 = \exp( (\log \alpha_0^{-1})^3/2^{k+3})$ and $d_1 = 1$. 
\begin{itemize}
\item if $D_1$ is unexceptional, we let $N_1 = N_0$, $\overline{d_1} = 1$, and $A_1$ be a subset of $A_0$ such that $|A_1| = \alpha_1 N_1$ where\footnote{This is always possible since $\alpha_1$ stated here is way smaller than $\alpha_0$ since $k \leq (\log\log\log N_0)^c$ for some $c<1$. Similarly, we can make such a choice in the exceptional case.} $2^{-k-3} (\log (2\alpha_{0})^{-1} )^3 \leq \log \alpha_{1}^{-1} \leq 2^{-k-2} (\log \alpha_{0}^{-1})^3$;
\item if $D_1$ is exceptional, then let $\overline{d_1}$ be the exceptional modulus. We choose $A_1$ to be a subset of the set obtained from Lemma~\ref{modification 2}, so that $A_1$ is contained in an arithmetic progression with common difference $\overline{d_1}$ and length $N_1 \gg \alpha_0  N_0 /\overline{d_1}$, and $A_1$ has density $\alpha_1$ in this arithmetic progression where $2^{-k-3} (\log (2\alpha_{0})^{-1} )^3 \leq \log \alpha_{1}^{-1} \leq 2^{-k-2} (\log \alpha_{0}^{-1})^3$.
\end{itemize} 
By our choice of $D_1$ and the bound $k \leq (\log\log\log N_0)^c$ for some small constant $c$, the parameters satisfy the conditions listed in (\ref{PR inner iteration - size assumption}), and so we can apply Lemma~\ref{PR iteration} to conclude that either the theorem is true (when the first outcome of Lemma~\ref{PR iteration} occurs) which contradicts our assumption, or we can find a monochromatic set $A''_1 \subseteq \mb{P}-1$ such that
 \begin{itemize}
 \item $A_1''$ is contained in an arithmetic progression of length $N_1''$ and common difference $d_1''\bar{d_1}$, where $N_1'' \geq \alpha_1^{C (\log \alpha_1^{-1})^2} (d_1D_1)^{ - C \log \alpha_1^{-1}} (\log N)^{ - C \log \alpha_1^{-1}} N_1$, $d \mid d''$ and $d_1'' \leq \alpha_1^{-C\log\alpha_1^{-1}} d_1$;
 \item $|A_1''| = \alpha''_1 N''_1$ where $\alpha''_1/2 \leq  {\alpha_1^{C \log \alpha_1^{-1}}} ({(k-1) d_1 D_1 \log N_1})^{-1} \leq \alpha''_1$;
 \item $A_1'' \cup ((A_1'' - A_1'') \cap (\mb{P}-1))$ is $(k-1)$-coloured.
\end{itemize}
In particular, it follows from the properties above that conditions (I)-(VII) hold for $i = 1$.

% The quantities without '' are the ones relevant to the iteration lemma.
Suppose for some $1 \leq j \leq k-1$, we have defined $D_i$, $A_i$, $N_i$, $\alpha_i$, $\overline{d_i}$, $A''_i, N''_i, d''_i, \alpha''_i$ which satisfy conditions (I)-(VII) for all $1 \leq i \leq j$. We choose $D_{j+1}$, $A_{j+1}$, $N_{j+1}$, $\alpha_{j+1}$, $\overline{d_{j+1}}$, $A''_{j+1}, N''_{j+1}, d''_{j+1}, \alpha''_{j+1}$ as follows. Fix 
$$D_{j+1} = \exp( (\log {\alpha''_{j}}^{-1})^3/ 2^{k-j+3}) \;\;\text{and}\;\; d_{j +1} = d''_j \overline{d_j}.$$
Depending on whether $D_{j+1}$ is exceptional, we define $A_{j+1}, N_{j+1}, \overline{d_{j+1}}, \alpha_{j+1}$ as follows: 
\begin{enumerate}[({A}.I)]
\item if $D_{j+1}$ is unexceptional, we take $A_{j+1} = A''_j$, $N_{j+1} = N''_j$, $\overline{d_{j+1}} = 1$ and $\alpha_{j+1} = \alpha''_j$;
\item if $D_{j+1}$ is exceptional, then let $\overline{d_{j+1}}$ be the exceptional modulus. Let $A_{j+1}$ be the set obtained from Lemma~\ref{modification 2}, so that $A_{j+1}$ is contained in an arithmetic progression with common difference $d_{j+1}\overline{d_{j+1}}$ and length $N_{j+1} \gg \alpha''_j  N''_j /\overline{d_{j+1}}$, and $A_{j+1}$ has density $\alpha_{j+1} \geq \alpha''_j /2$ in this arithmetic progression.
\end{enumerate} 

We shall first show that conditions (I)-(IV) hold for $j+1$. 

Condition (I) is a direct consequence of the definitions of $D_{j+1}$ and $\alpha_{j+1}$.

By (A.I), (A.II) and (VI), we have
\begin{align*}
\log \alpha_{j+1} \geq & \log \alpha_j'' - \log 2 \\
\geq & - C (\log \alpha_j^{-1})^2 - \log (k-j) - \log d_j - \log D_j - \log\log N_j - \log 2.
\end{align*}
Since $k = (\log\log\log N_0)^c$ for some small constant $c<1$, and $\alpha_i \leq (k \log N_0)^{-1}$ for all $i \leq j$, we have $C (\log \alpha_j^{-1})^2 + \log (k-j) +  \log\log N_j + \log 2 \leq 2^{-k-5+j} (\log \alpha_j^{-1})^3$. By (II), (III) and $\alpha_j \leq (k\log N_0)^{-1}$, we also have 
\begin{align*}
\log d_j \leq 2^{-k-4+j} (\log \alpha_{j-1}^{-1})^3 \leq 2^{-k-1+j} \log \alpha_j^{-1} \leq 2^{-k-5+j} (\log \alpha_j^{-1})^3.
\end{align*}
Combine the bounds above with (I), we conclude that
\begin{align*}
\log \alpha_{j+1}^{-1} \leq  2^{-k-4+j} (\log \alpha_j^{-1})^3 + \log D_j \leq 2^{-k-3+j} (\log \alpha_j^{-1})^3.
\end{align*}
On the other hand, since $\log \alpha_{j+1}^{-1} \geq \log \alpha_j^{''-1}$, it follows from (I) and (VI) that 
$$\log \alpha_{j+1}^{-1} \geq  \log D_j \geq 2^{-k-4+j} (\log (2\alpha_j)^{-1})^3.$$  
Thus, we conclude that condition (II) holds for $j+1$.

To see that condition (III) is true, notice that combining our definitions of $d_{j+1}$ and $\overline{d_j}$, (I), (III) and (V) yields
\begin{align*}
\log d_{j+1} = & \log d''_j + \log \overline{d_j} \\
\leq & C (\log \alpha_{j}^{-1} )^2 + \log d_j + \log D_j \\
\leq & C (\log \alpha_{j}^{-1} )^2 + 2^{-k-4+j} (\log \alpha_{j-1}^{-1})^3 + 2^{-k -4+j}(\log \alpha_j^{-1})^3.
\end{align*}
Since $k \leq (\log\log\log N_0)^c$ and $\alpha_j \leq (k \log N_0)^{-1}$, it follows that one has $C(\log \alpha_{j}^{-1} )^2  \leq 2^{-k -5+j}(\log \alpha_j^{-1})^3$. On the other hand, by condition (II), $k \leq (\log\log\log N_0)^c$ and $\alpha_j \leq (k\log N_0)^{-1}$, we have $2^{-k-4+j} (\log \alpha_{j-1}^{-1})^3 \leq \log 2^{-k-1+j} \alpha_j^{-1} \leq  2^{-k-5+j} (\log \alpha_j^{-1})^3$. Therefore,
$$\log d_{j+1}  \leq 2^{-k-3+j} (\log \alpha_j^{-1})^3.$$

By (A.I), (A.II) and (V), one obtains
\begin{align*}
\log N_{j+1} \geq  & \log N''_j - \log D_{i+1} + \log \alpha''_j + O(1) \\
\geq & -C (\log \alpha^{-1}_j)^3 - C (\log d_j + \log D_j) \log \alpha_j^{-1}- C \log \alpha_j^{-1}\log\log N_j \\& + \log N_j  - \log D_{j+1} - C(\log \alpha_j^{-1})^2 \\
& - \log(k-j) -\log d_j - \log D_j -\log\log N_j + O(1).
\end{align*}
Notice that
$$ (\log d_j )(\log \alpha_j^{-1})+\log \alpha_j^{-1}\log\log N_j \ll  (\log \alpha^{-1}_j)^2 $$
which follows from (II), (III) and the inequality $\alpha_j \leq \alpha_0 \leq (k \log N)^{-1}$. Hence, by (I), (II), (A.I) and (A.II), one has
\begin{align*}
\log N_{j+1} \geq & -C (\log \alpha^{-1}_j)^3 - 2C (\log D_j) \log \alpha_j^{-1}+ \log N_j  - \log D_{j+1} + O ( (\log \alpha_j^{-1})^2) \\
\geq & \log N_j - C' (\log \alpha^{-1}_j)^9,
\end{align*}
and so (IV) holds for $j+1$.

In fact, it follows from the verifications above that the estimates from (\ref{PR inner iteration - size assumption}) hold for $A_{j+1}, D_{j+1}, \alpha_{j+1}, d_{j+1}$. The first inequality there follows from iterating (II) and (IV) and using (I). More specifically, one has
\begin{equation*}
\log \alpha^{-1}_{j+1} \leq  (\log \alpha^{-1}_0)^{3^{j+1}}
\end{equation*}
and
$$\log N_{j+1} \geq \log N_0 - (j+1) C'  (\log \alpha^{-1}_j)^9.$$
Therefore, the first inequality from (\ref{PR inner iteration - size assumption}) holds if
$$2^{-k-2+j}  (\log \alpha^{-1}_0)^{3^{j+3}} \ll  \log N_0 - (j+1) C'  (\log \alpha^{-1}_0)^{3^{j+3}}$$
for some sufficiently large implicit constant, which is true since $j \leq (\log\log\log N_0)^c$ where $c<1$ and $\alpha_0 \gg (k \log N_0)^{-1}$. 

By (III), we have
$$\log d_{j+1}  \leq 2^{-k-3+j} (\log \alpha_j^{-1})^3.$$
By (VI), we have $\log \alpha_j^{''-1} + \log 2 \geq C(\log \alpha_j^{-1})^2$, and so
$$\log \alpha_j^{-1} \leq (2 (\log \alpha_j^{''-1}) / C)^{1/2}.$$
Thus, it follows that
$$\log d_{j+1} \leq 2^{-k-3+j}(2/C)^{3/2} (\log \alpha_j^{''-1})^{3/2}.$$ 
Combining this inequality with the definition of $D_{j+1}$, $k \leq (\log\log\log N_0)^c$ and $\alpha_j \leq (k \log N_0)^{-1}$ yields
$$\log d_{j+1} \leq C_2^{-1} (\log D_{j+1}) / 2.$$
On the other hand, it follows from condition (I), $k \leq (\log\log\log N_0)^c$ and $\log \alpha_{j+1}^{-1} \gg \log(k\log N_0)$ that
$$(\log \alpha_{j+1}^{-1})^2  \leq C_2^{-1} (\log D_{j+1}) / 2.$$
Thus, the second inequality of (\ref{PR inner iteration - size assumption}) also holds.

Therefore, by Lemma~\ref{PR iteration}, either the theorem holds (if the first outcome of Lemma~\ref{PR iteration} occurs) which contradicts our assumption, or we can find a monochromatic set $A''_{j+1} \subseteq \mb{P}-1$ such that
 \begin{enumerate}[({B}.I)]
 \item $A_{j+1}''$ is contained in an arithmetic progression of length $N_{j+1}''$ and common difference $d_{j+1}''\overline{d_{j+1}}$, where $d_{j+1} \mid d_{j+1}''$, $d_{j+1}'' \leq \alpha_{j+1}^{-C\log\alpha_{j+1}^{-1}} d_{j+1}$, and $N_{j+1}'' \geq \alpha_{j+1}^{C (\log \alpha_{j+1}^{-1})^2} (d_{j+1}D_{j+1})^{ - C \log \alpha_{j+1}^{-1}} (\log N_{j+1})^{ - C \log \alpha_{j+1}^{-1}} N_{j+1}$;
 \item $|A_{j+1}''| = \alpha''_{j+1} N''_{j+1}$ where 
$$\alpha''_{j+1}/2 \leq {\alpha_{j+1}^{C \log \alpha_{j+1}^{-1}}} ({(k-j-1) d_{j+1} D_{j+1} \log N_{j+1}})^{-1} \leq \alpha''_{j+1};$$
 \item $A_{j+1}'' \cup ((A_{j+1}'' - A_{j+1}'') \cap (\mb{P}-1))$ is $(k-j-1)$-coloured.
\end{enumerate}
In particular, this implies conditions (V)-(VII) hold for $i = j+1$.

\end{proof}

\section{Concluding remarks}
It is interesting to understand the true size of $r_p(k)$. The possible exceptional zero has an impact on the size of $|\widehat{F_{N,d}}(0)|$, which ultimately leads to the bound in Theorem~\ref{improved PR bound}. It seems that we cannot rule out the possibility that almost all energy is concentrated on $\mfr{M}_{a,q}$ where $q$ is a multiple of the modulus of the exceptional zero $d_D$. A model case is to consider a set $A$ which has large density on $\{d_D n + b_i: n \leq N, 1 \leq i \leq j\}$, where $b_i$ are integers such that $\psi(d_D N; d_D, b_i) \geq d_D N/\phi(d_D)$ and $d_D$ is a product of small primes.

On the other hand, a weaker bound on the major arcs would lead to a slower density increment. 
One may compare Lemma~\ref{iteration} with the iteration of Pintz--Steiger--Szemer\'edi~\cite{PSS88} to see this.

\section*{Acknowledgement}
The author would like to thank Tom Sanders for introducing the problem and many suggestions, and Joni Ter\"av\"ainen for discussions related to the transference principle.

\end{document}